\numberwithin{equation}{section}
\newcommand{\N}{\ensuremath{\mathbb{N}}}    
\newcommand{\Z}{\ensuremath{\mathbb{Z}}}    
\renewcommand{\P}{\ensuremath{\mathbb{P}}}
\newcommand{\E}{\ensuremath{\mathbb{E}}}
\newcommand{\Id}{\mathbf {1}} 
\newcommand{\ee}{e^{\frac1e}}
\newcommand{\given}{\ \vert \ }
\newcommand{\suf}{{\text{suf}}}
\renewcommand{\tau}{\mathcal T}
\newcommand{\MM}{\mathcal{M}}
\numberwithin{equation}{section}
\theoremstyle{plain}
\newtheorem{theorem}{Theorem}
\newtheorem{lemma}[equation]{Lemma}
\newtheorem{corollary}{Corollary}
\theoremstyle{definition}
\newtheorem{definition}[equation]{Definition}
\theoremstyle{remark}
\begin{document}


\keywords{chains of infinite order, variable length Markov chains, chains with unbounded variable
length memory, random perturbations, algorithm Context, context trees}
\subjclass[2000]{}


\title[Random perturbations of stochastic chains]{Random perturbations of stochastic chains with
  unbounded variable length memory}
  
\author{Pierre Collet }
\address{
  Centre de Physique Th\'eorique, CNRS UMR 7644, Ecole Polytechnique,
  91128 Palaiseau Cedex, France} 
 \email{collet@cpht.polytechnique.fr}

\author{Antonio Galves} 
\address{
  Instituto de Matem\'atica e Estat\'{\i}stica, Universidade de S\~ao
  Paulo, BP 66281, 05315-970 S\~ao Paulo, Brasil} 
  \email{galves@ime.usp.br}

\author{Florencia G. Leonardi}
\address{
  Instituto de Matem\'atica e Estat\'{\i}stica, Universidade de S\~ao
  Paulo, BP 66281, 05315-970 S\~ao Paulo, Brasil} 
  \email{leonardi@ime.usp.br}

\date{July 10, 2007} \thanks{
This work is part of PRONEX/FAPESP's project
  \emph{Stochastic behavior, critical phenomena and rhythmic pattern
    identification in natural languages} (grant number 03/09930-9),
  CNPq's project \emph{Stochastic modeling of speech} (grant number
  475177/2004-5) and CNRS-FAPESP project \emph{Probabilistic phonology
    of rhythm}. AG is partially supported by a CNPq fellowship (grant
  308656/2005-9) and FGL is partially supported by a FAPESP fellowship
  (grant 06/56980-0)
  }
  
\begin{abstract}
We consider binary infinite order stochastic chains perturbed by a
  random noise. This means that at each time step, the value assumed
  by the chain can be randomly and independently flipped with a small
  fixed probability. We show that the transition probabilities of the
  perturbed chain are uniformly close to the corresponding transition
  probabilities of the original chain. As a consequence, in
  the case of stochastic chains with unbounded but otherwise finite
  variable length memory, we show that it is possible to
  recover the context tree of the original chain, using a suitable
  version of the algorithm Context, provided that the noise is small
  enough.
\end{abstract}

\maketitle


\section{Introduction}


The original motivation of this paper is the following question. Is it
possible to recover the context tree of a variable length Markov chain
from a noisy sample of the chain. We recall that in a variable length
Markov chain the conditional probability of the next symbol, given the
past depends on a variable portion of the past whose length depends on
the past itself.  This class of models were first introduced by
\citet{rissanen1983} who called them {\sl finite memory sources} or
{\sl tree machines}. They recently became popular in the statistics
literature under the name of {\sl variable length Markov chains
  (VLMC)} \citep{buhlmann1999}.

The notion of variable memory model can be naturally extended to a non
markovian situation where the contexts are still finite, but their
lengths are no longer bounded (see for example \citet{ferrari2003},
\citet{csiszar2006} and \citet{duarte2006}). This leads us to consider
not only randomly perturbed unbounded variable length memory models,
but more generally randomly perturbed infinite order stochastic
chains.

We will consider binary chains of infinite order in which at each time
step the value assumed by the chain can be randomly and independently
flipped with a small fixed probability. Even if the original chain is
markovian, the perturbed chain is in general a chain of infinite order
(we refer the reader to \citet{ffg01} for a self contained introduction to chains of infinite order).  We show that the transition probabilities of the perturbed
chain are uniformly close to the corresponding transition
probabilities of the original chain. More precisely, we prove that the
difference between the conditional probabilities of the next symbol
given a finite past of any fixed length is uniformly bounded above by
the probability of flipping, multiplied by a fixed constant. This is
the content of our first theorem.

Using this result we are able to solve our original problem of
recovering the context tree of a chain with unbounded variable length
from a noisy sample.  To make this point clear, we must explain the
notion of \emph{context}. In his original 1983 paper, Rissanen used the word 
context to designate the minimal suffix of the string
of past symbols which is enough to define the probability of the
next symbol.  Rissanen also observed that this notion is interesting
only if the set of all contexts satisfies the suffix property, which
means that no context is a proper suffix of another context. This
property allows to represent the set of all contexts as the set of
leaves of a rooted labeled tree, henceforth called the \emph{context
  tree} of the chain.  With this representation the process is
described by the tree of all contexts and an associated family of
probability measures on $A$, indexed by the leaves of the tree. Given a
context, its associated probability measure gives the probability of
the next symbol for any past having this context as a suffix.

\citet{rissanen1983} not only introduced the class of variable memory
models but he also introduced the \emph{algorithm Context} to estimate
both the context tree and the associated family of probability transition. 
The way the algorithm Context works
can be summarized as follows. Given a sample produced by a chain with
variable memory, we start with a maximal tree of candidate contexts
for the sample. The branches of this first tree are then pruned until
we obtain a minimal tree of contexts well adapted to the sample. 

From \citet{rissanen1983} to \citet{galves2006}, passing by
\citet{ron1996} and \citet{buhlmann1999}, several variants of the
algorithm Context have been presented in the literature. In all the
variants the decision to prune a branch is taken by considering a {\em
  gain} function. A branch is pruned if the gain function assumes a
value smaller than a given threshold. The estimated context tree is
the smallest tree satisfying this condition. The estimated family of
probability transitions is the one associated to the minimal tree of
contexts.

\citet{rissanen1983} proved the weak consistency of the algorithm
Context when the tree of contexts is finite. \citet{buhlmann1999}
proved the weak consistency of the algorithm also in the finite case
without assuming a prior known bound on the maximal length of the
memory but instead using a bound allowed to grow with the size of the
sample. In both papers the gain function is defined using the log
likelihood ratio test to compare two candidate trees and the main
ingredient of the consistency proofs was the chi-square approximation
to the log likelihood ratio test for Markov chains of fixed order. 

The unbounded case was considered by \citet{ferrari2003},
\citet{duarte2006}, \citet{csiszar2006} and \citet{leonardi2007}. The
first two papers essentially extend to the unbounded case the
original chi-square approach introduced by Rissanen. Instead of the chi-square,
the last
two papers use penalized likelihood algorithms, 
related to the Bayesian Information Criterion (BIC), to
estimate the context tree. We refer the
reader to \citet{csiszar2006} for a nice description of other
approaches and results in this field, including the context tree
maximizing algorithm by \citet{willems1995}.

In the present paper we use a variant of the algorithm Context
introduced in \citet{galves2006} for finite trees and extended to
unbounded trees in \citet{galves2007}. In this variant, the
decision of pruning a branch is taken by considering the difference
between the estimated conditional probabilities of the original branch
and the pruned one, using a suitable threshold. Using exponential
inequalities for the estimated transition probabilities associated to
the candidate contexts, these papers not only show the consistency of
this variant of the algorithm Context, but also provide an
exponential upper bound for the rate of convergence.

This version of the algorithm Context does not distinguish transition
probabilities which are closer than the threshold level used in the
pruning decision. Our first theorem assures that this is what happens
between the conditional probabilities of the original variable memory
chain and the perturbed one, if the probability of random flipping is
small enough. Hence it is natural to expect that with this version of
the algorithm Context, one should be able to retrieve the original
context tree out from the noisy sample. This is actually the case, as
we prove in the second theorem.

The paper is organized as follows. In section~\ref{def} we give the
definitions and state the main results. Section 3 and 4 are devoted to the proof 
of Theorem~1 and 2, respectively.


\section{Definitions and results}\label{def}


Let $A$ denote the binary alphabet $\{0,1\}$ with size $|A|=2$.
Given two integers $m\leq n$, we will denote by $w_m^n$ the sequence
$(w_m, \ldots, w_n)$ of symbols in $A$. The length of the sequence
$w_m^n$ is denoted by $\ell(w_m^n)$ and is defined by $\ell(w_m^n) =
n-m+1$.  Any sequence $w_m^n$ with $m > n$ represents the empty string
and is denoted by $\lambda$. The length of the empty string is
$\ell(\lambda) = 0$.

Given two sequences $w$ and $v$, we will denote by
$vw$ the sequence of length $\ell(v) + \ell(w) $ obtained by
concatenating the two strings.  In particular, $\lambda w = w\lambda=
w$.  The concatenation of sequences is also extended to the case in
which $v$ denotes a semi-infinite sequence, that is
$v=v_{-\infty}^{-1}$.

We say that the sequence $s$ is a \emph{suffix} of the sequence $w$ if
there exists a sequence $u$, with $\ell(u)\geq 1$, such that $w = us$.
In this case we write $s\prec w$. When $s\prec w$ or $s=w$ we write
$s\preceq w$. Given a sequence $w$ we denote by $\suf(w)$ the largest suffix of $w$.

In the sequel $A^j$ will denote the set of all sequences of length $j$
over $A$ and $A^*$ represents the set of all finite sequences, that is
\begin{equation*}
  A^* \,=\, \bigcup_{j=1}^{\infty}\,A^j.
\end{equation*}

\begin{definition} 
  A countable subset $\tau$ of $A^*$ is a \emph{tree} if no sequence
  $s \in \tau$ is a suffix of another sequence $w \in \tau$. This
  property is called the \emph{suffix property}.
\end{definition}

We define the \emph{height} of the tree $\tau$ as
\begin{equation*}
  \ell(\tau) = \sup\{\ell(w) : w\in\tau\}. 
\end{equation*}

In the case $\ell(\tau)<+\infty$ it follows that $\tau$ has finite
cardinality. In this case we say that $\tau$ is \emph{bounded}
and we will denote by $|\tau|$ the number of sequences in $\tau$. On
the other hand, if $\ell(\tau)=+\infty$ then $\tau$ has a countable
number of sequences. In this case we say that the tree $\tau$ is
\emph{unbounded}.
 
Given a tree $\tau$ and an integer $K$ we will denote by $\tau|_K$ the
tree $\tau$ \emph{truncated} to level $K$, that is
\begin{equation*}
  \tau|_K = \{w \in \tau\colon \ell(w) \le K\} \cup \{ w\colon \ell(w)=K 
  \text{ and } w\prec u, \text{ for some } u\in\tau\}.
\end{equation*}

We will say that a tree is \emph{irreducible} if no sequence can be
replaced by a suffix without violating the suffix property. This
notion was introduced in \cite{csiszar2006} and generalizes the
concept of complete tree. 
 
\begin{definition}\label{def:pct}
 A \emph{probabilistic context tree over $A$} is an
  ordered pair $(\tau,p)$ such that
\begin{enumerate}
\item $\tau$ is an irreducible tree;
\item $p = \{p(\cdot|w); w \in \tau\}$ is a family of transition
  probabilities over $A$.
\end{enumerate}
\end{definition}

Consider a stationary stochastic chain $\{X_t\colon t\in\Z\}$
over $A$. Given a sequence $w\in A^j$ we denote by 
\begin{equation*}
 p(w) \,=\,  \P(X_1^j = w)
\end{equation*}
the stationary probability of the cylinder defined by the sequence $w$.
If $p(w) > 0$ we write
\begin{equation*}
p(a|w) \,=\, \P ( X_0 =a \given X_{-j}^{-1}=w)\,.
\end{equation*}

\begin{definition}
  A sequence $w\in A^j$ is a \emph{context} for the process $\{X_t\colon t\in\Z\}$ if
  $p(w)>0$ and for any semi-infinite sequence $x_{-\infty}^{-1}$ such
  that $w$ is a suffix of $x_{-\infty}^{-1}$ we have that
\begin{equation}\label{eq:pt}
  \P ( X_0 =a \given X_{-\infty}^{-1}=x_{-\infty}^{-1}) \,=\, p(a|w),
  \quad\text{for all $a\in A$},
\end{equation}
and no suffix of $w$ satisfies this equation.
\end{definition}

\begin{definition}
We say that the process $\{X_t\colon t\in\Z\}$ is \emph{compatible} with the probabilistic context tree $(\tau,\bar p)$ if the following conditions are satisfied
\begin{enumerate}
\item $w\in\tau$ if and only if $w$ is a context for the process $\{X_t\colon t\in\Z\}$. 
\item For any $w\in\tau$ and any $a\in A$, $\bar p(a|w) =  \P ( X_0 =a \given X_{-|w|}^{-1}=w)$.
\end{enumerate}
\end{definition}

In the unbounded case, the compactness of $A^{\Z}$ assures that there is at least one
stationary stochastic chain compatible with a probabilistic context tree. The uniqueness requires 
further conditions, such as the ones presented in \citet{fernandez2002}.

\begin{definition}
  A probabilistic context tree $(\tau,p)$ is of \emph{type B} if it satisfies the following conditions
  \begin{enumerate}
  \item {\em Non-nullness}, that is
  \[
 \alpha :=  \inf_{w\in\tau,a\in A} p(a|w)  > 0;
  \]
  \item {\em Log-continuity}, that is
  \[
  \beta_k \rightarrow 0 \text{ when } k\rightarrow\infty,
  \]
   where the sequence 
  $\{\beta_k\}_{k\in\N}$ is defined by 
    \begin{equation*}\label{continuity}
      \beta_k \,  :=  \,\sup \{ | 1 - \textstyle{\frac{p(a | w)}{p(a | v)}}|\colon a\in A, v, w \in \tau \, \mbox{ with } \,
      w \overset{k}{=} v \}.
    \end{equation*}  
    Here, $w \overset{k}{=} v$ means that there exists a sequence $u$,
    with $\ell(u) = k$ such that $u\prec w$ and $u\prec v$. The sequence $\{\beta_k\}_{k\in\N}$ is called 
   the \emph{continuity rate}.
  \end{enumerate}
 \end{definition} 
  
  For a probabilistic context tree of type B with summable continuity rate, the maximal coupling argument used in \citet{fernandez2002} implies the uniqueness of the law of the chain consistent with it. Then, we will 
  assume here that the continuity rate is summable, that is  
  \begin{equation}\label{beta}
  \beta := \sum_{k\in\N} \beta_k \,<\, +\infty.
  \end{equation}

This condition immediately  implies that 
\begin{equation*}
1\; \leq \;\;\beta^* := \prod_{k=0}^{+\infty} (1+\beta_k)\;\; <\; +\infty.
\end{equation*}

Given an integer $k\geq 1$ we define
\begin{equation}
D_k = \min_{w\in\tau: \ell(w)\leq k}\; \max_{a\in A}\{\,|p(a|w) - p(a|\suf(w))|\,\}.
\end{equation}


In this paper we are interested on the effect of a  Bernoulli
noise flipping independent from the successive symbols produced by the
chain. Namely, let $\{\xi_{t}\colon t\in \Z\}$ be an i.i.d. sequence of random
variables taking values in the alphabet $A$, independent of $\{X_{t}\colon t\in \Z\}$,
with 
$$
\P\big(\xi_{t}=0)=1-\epsilon,
$$
where $\epsilon$ is a fixed noise parameter in $[0,1]$.
For $a$ and $b$ in $A$, we define
$$
a\oplus b=a+b\pmod2,
$$
and $\bar a=1\oplus a$.
We now define the stochastically perturbed chain
$\{Z_t\colon t\in\Z\}$ by
$$
Z_t=X_{t}\oplus\xi_{t}.
$$ 
The process $\{Z_t\colon t\in\Z\}$ is an example of a hidden Markov model. In the case
$\epsilon=1/2$, $\{Z_t\colon t\in\Z\}$ is an i.i.d. uniform Bernoulli. However, in
general it is not a chain of finite order.

We will use the shorthand notation
$$
q(w_1^j)=\P\big(Z_1^j=w_1^j\big)
$$
and
$$
q(a|w_{-j}^{-1})=\P\big(Z_0=a\given
Z_{-j}^{-1}=w_{-j}^{-1}\big)
$$
to denote the probabilities corresponding to the process $\{Z_t\colon t\in\Z\}$.  
We also define
\begin{equation}\label{ek}
q_k = \min\{\,q(w)\colon \ell(w)\leq k \text{ and } q(w) > 0\,\}.
\end{equation}

We can now state our first theorem.

\begin{theorem}\label{principal} Assume the chain
  $\{X_t\colon t\in\Z\}$ has summable continuity rate. Then, for any
$\epsilon\in [0,1]$  and for any $j\geq0$
\[
\sup_{w_{-j}^{0}}\big| \P\big(Z_0=w_0\,\big|\,
\,Z_{-j}^{-1}=\,w_{-j}^{-1}\big)-
\P\big(X_0=w_0\,\big|\,
\,X_{-j}^{-1}=\,w_{-j}^{-1}\big) \big|\;\le\; (1+\frac{4\beta\beta^*}{\alpha})\,\epsilon\,.
\]
\end{theorem}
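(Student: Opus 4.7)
The plan is to first isolate the noise at time $0$. Since $\xi_0$ is independent of $X_0$ and of $Z_{-j}^{-1}$, conditioning on the value of $\xi_0$ gives
\[
q(a|w_{-j}^{-1}) = (1-2\epsilon)\,\P\bigl(X_0 = a\given Z_{-j}^{-1} = w_{-j}^{-1}\bigr) + \epsilon,
\]
so that $|1-2p(a|w)|\le 1$ and the triangle inequality yield
\[
\bigl|q(a|w_{-j}^{-1}) - p(a|w_{-j}^{-1})\bigr| \le \epsilon + \bigl|\P(X_0 = a\given Z_{-j}^{-1} = w_{-j}^{-1}) - p(a|w_{-j}^{-1})\bigr|.
\]
The task reduces to bounding the second term by $(4\beta\beta^*/\alpha)\epsilon$.

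Next, I would insert the posterior distribution of the underlying past. Setting $\mu_w(v)=\P(X_{-j}^{-1}=v\given Z_{-j}^{-1}=w)$ and using that $\xi_{-j}^{-1}$ is independent of $X_0$ given $X_{-j}^{-1}$, one gets $\P(X_0=a\given Z_{-j}^{-1}=w)=\sum_v \mu_w(v)\,p(a|v)$ (with $p(a|v)=\P(X_0=a\given X_{-j}^{-1}=v)$ as in the theorem), so that
\[
\P(X_0 = a\given Z_{-j}^{-1} = w) - p(a|w) = \sum_{v\in A^j}\mu_w(v)\bigl[p(a|v)-p(a|w)\bigr].
\]
A preliminary observation is that the log-continuity transfers to the finite-past conditionals: if $v,w\in A^j$ share their last $k$ symbols, then any two semi-infinite extensions also share their last $k$ symbols, so by the suffix property the corresponding contexts in $\tau$ either coincide or share a common suffix of length at least $k$, and integrating the log-continuity bound over the conditional law of the older past gives $|p(a|v)-p(a|w)|\le\beta_k$. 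Decomposing $A^j$ into the classes $V_k=\{v:v_{-k}^{-1}=w_{-k}^{-1},\,v_{-k-1}\neq w_{-k-1}\}$ for $0\le k<j$, this yields
\[
\bigl|\P(X_0 = a\given Z_{-j}^{-1} = w) - p(a|w)\bigr| \le \sum_{k=0}^{j-1}\beta_k\,\mu_w(V_k).
\]

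The main obstacle is showing $\mu_w(V_k)\le \beta^*\epsilon/(\alpha(1-\epsilon))$. On $\{Z_{-j}^{-1}=w\}$, membership in $V_k$ coincides with $\{\xi_{-k}^{-1}=0,\,\xi_{-k-1}=1\}$, and using the easy lower bound $\P(Z_{-j}^{-1}=w)\ge(1-\epsilon)^{k+1}\pi_0$ for the denominator, Bayes gives
\[
\mu_w(V_k) \le \frac{\epsilon}{1-\epsilon}\cdot\frac{\pi_1}{\pi_0},\qquad \pi_b:=\P\bigl(Z_{-j}^{-k-2}=w_{-j}^{-k-2},\,X_{-k-1}^{-1}=(b\oplus w_{-k-1})w_{-k}^{-1}\bigr).
\]
Expanding $\pi_1,\pi_0$ as sums over the value $u$ of $X_{-j}^{-k-2}$ and comparing integrands pointwise, the ratio $p(u\,\bar w_{-k-1}\,w_{-k}^{-1})/p(u\,w_{-k-1}\,w_{-k}^{-1})$ factorizes into the single-step transition ratio at $-k-1$ (bounded by $(1-\alpha)/\alpha\le 1/\alpha$ via non-nullness) times a product over $i=-k,\ldots,-1$ of ratios of conditional probabilities whose histories differ only at position $-k-1$ and share a common suffix of length $i+k$; the finite-past log-continuity bounds the $i$-th factor by $1+\beta_{i+k}$, and the product telescopes to at most $\prod_{m=0}^{k-1}(1+\beta_m)\le\beta^*$.

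To finish, for $\epsilon\le 3/4$ the factor $(1-\epsilon)^{-1}\le 4$ yields $|\P(X_0=a|Z_{-j}^{-1}=w)-p(a|w)|\le 4\beta\beta^*\epsilon/\alpha$, and the stated bound follows. For $\epsilon>3/4$, the trivial estimate $\mu_w(V_k)\le 1$ gives $|\P(X_0=a|Z_{-j}^{-1}=w)-p(a|w)|\le\beta$, and one checks $\epsilon+\beta\le(1+4\beta\beta^*/\alpha)\epsilon$ from $\beta^*\ge 1$, $\alpha\le 1$, and $\epsilon>1/4\ge\alpha/(4\beta^*)$.
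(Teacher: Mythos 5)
Your argument is correct, and it takes a genuinely different route from the paper's. The paper interpolates between $\P(X_0=w_0\given Z_{-j}^{-1}=w_{-j}^{-1})$ and $\P(X_0=w_0\given X_{-j}^{-1}=w_{-j}^{-1})$ through a telescoping sum of hybrid conditionings $\P(X_0=w_0\given X_{-i}^{-1}=w_{-i}^{-1}, Z_{-j}^{-i-1}=w_{-j}^{-i-1})$, splitting each increment according to whether $X_{-i-1}$ agrees with $w_{-i-1}$ and invoking its Lemmas 3.1--3.3 to bound the two resulting terms by $2\beta_i\beta^*\epsilon/\alpha$ and $2\beta_i\epsilon$ respectively. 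You instead perform a one-shot decomposition of the posterior $\mu_w$ over the hidden block according to the depth $k$ of the first disagreement with the observed past, pairing $\beta_k$ with the posterior mass $\mu_w(V_k)$. The key quantitative ingredient is the same in both proofs: a chain-rule ratio estimate showing that a single hidden flip beyond an agreed prefix costs at most a factor $\beta^*/\alpha$ relative to no flip (your $\pi_1/\pi_0$ bound is essentially the paper's third assertion of Lemma 3.2), and the transfer of non-nullness and log-continuity from contexts to finite-past conditionals by averaging over infinite extensions, which you justify correctly via the suffix property. What your route buys is a cleaner structure and a slightly sharper constant for small $\epsilon$, namely $\epsilon+\frac{\beta\beta^*}{\alpha(1-\epsilon)}\epsilon$ versus the paper's $\epsilon+\frac{2\beta\beta^*}{\alpha}\epsilon+2\beta\epsilon$; the price is the extra factor $(1-\epsilon)^{-1}$ from lower-bounding the denominator $\P(Z_{-j}^{-1}=w)$ by the no-flip contribution, which forces your case split at $\epsilon=3/4$ --- a split the paper avoids because its hybrid conditioning fixes $X_{-i}^{-1}=w_{-i}^{-1}$ exactly rather than bounding its posterior probability. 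The split is handled correctly (using $\beta^*\ge1$, $\alpha\le1$), so there is no gap; only note the small typo in your first display, where $|1-2p(a|w)|$ should read $|1-2\P(X_0=a\given Z_{-j}^{-1}=w)|$.
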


To state the second theorem we first need to present the version of the Algorithm Context introduced in 
\citet{galves2006} and \citet{galves2007}.


In what follows we will assume that $z_1, z_2 \dotsc, z_n$ is a
sample of the observed chain $\{Z_t\colon t\in\Z\}$ and that the underlying chain
$\{X_t\colon t\in \Z\}$ is compatible with the probabilistic context tree $(\tau,p)$. 

 For any finite string $w$ with $\ell(w) \le n$, we denote by
 $N_n(w)$ the number of occurrences of the string in the sample; that is
  \begin{equation}
    \label{eq:Nn}
     N_n(w)=\sum_{t=0}^{n-\ell(w)}\Id\{z_{t+1}^{t+\ell(w)}=w\}.
  \end{equation}
  
  For any element $a \in A$ and any finite sequence $w\in A^*$, the empirical
  transition probability $\hat{q}_n(a|w)$ is defined by
  \begin{equation}\label{phat}
  \hat{q}_n(a|w)= \frac{N_n(wa)+1}{N_n(w\cdot)+|A|} \/.
  \end{equation}
  where 
  \[
  N_n(w\cdot)=\sum_{b \in A} N_n(wb)\,.
  \]
  This definition of $\hat{q}_n(a|w)$ is convenient because it is
  asymptotically equivalent to $\frac{N_n(wa)}{N_n(w\cdot)}$ and it
  avoids an extra definition in the case $N_n(w\cdot)=0$.

  The variant of Rissanen's {\sl algorithm Context} we will use is defined as follows.
  First of all, let us define for any finite string $w\in A^*$:
\[ 
\Delta_n(w) = \max_{a\in A}
|\hat{q}_n(a|w)-\hat{q}_n(a|\suf(w))| \/.
\]
The $\Delta_n(w)$ operator computes a distance between the empirical transition probabilities 
associated to the sequence  $w$ and the one associated to the sequence $\suf(w)$.

\begin{definition}
  Given $\delta > 0$ and $d < n$, the tree estimated with the algorithm Context is
  \[
  \hat{\tau}_n^{\delta,d} = \{w\in A_1^d: \Delta_n(w) >
  \delta\,\text{ and }\,\Delta_n(uw)\leq \delta \;\;\forall u\in A_1^{d-\ell(w)}\},
  \]
  where $A_1^r$ denotes the set of all sequences of length at most $r$. In the case 
  $\ell(w) = d$ we have $A_1^{d-\ell(w)} = \emptyset$.
\end{definition}

It is easy to see that $\hat{\tau}_n^{\delta,d}$ is a tree. Moreover, the way we defined 
$\hat{q}_n(\cdot|\cdot)$  in (\ref{phat})
associates a probability distribution to each sequence in $\hat{\tau}_n^{\delta,d}$.

We may now state our second theorem.

\begin{theorem}\label{expobounded}
 Let $K$ be an integer and let $z_1, z_2 \dotsc, z_n$ be a sample of the perturbed chain $\{Z_t\colon t\in\Z\}$.
 Then, for any $d$ satisfying 
 \begin{equation}\label{d}
 d >  \max_{w\in \tau|_K} \min\,\{ \ell(v)\colon v\in\tau,  v\succeq w\},
 \end{equation}
 for any 
 $\delta$ such that $2(1+\frac{4\beta\beta^*}{\alpha})\epsilon <\delta < D_d-2(1+\frac{4\beta\beta^*}{\alpha})\epsilon$ and for any $n$ such that
 \[
 n \;>\; \frac{4(|A|+1)}{[\min(\delta, D_d-\delta)-2\epsilon(1+\frac{4\beta\beta^*}{\alpha})]q_d
 } + d
 \] 
 we  have 
  \begin{equation*} 
  \P(\hat\tau_n^{\delta,d}|_K \neq \tau|_K)\; \leq \;
  4\,\ee\,(|A|+1)\,|A|^{d+1} \exp \bigl[- (n-d)\;\frac{[ \min(\delta, D_d-\delta)-2\epsilon(1+\frac{4\beta\beta^*}{\alpha})]^2 q_d^2}{256e(1+\beta)|A|^2(d+1)}\bigl]. 
    \end{equation*}
\end{theorem}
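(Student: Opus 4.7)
The plan is to combine Theorem~\ref{principal} with an exponential concentration inequality for the empirical transition probabilities of the perturbed chain $\{Z_t\}$, in the spirit of the analysis of the algorithm Context in \citet{galves2007}. Setting $c = 1+4\beta\beta^*/\alpha$ and $\eta = [\min(\delta, D_d-\delta) - 2c\epsilon]/2$ (strictly positive by the hypothesis on $\delta$), I first introduce the ``good event''
\[
E \;=\; \bigl\{\, |\hat q_n(a|w) - q(a|w)| \le \eta \text{ for all } a \in A \text{ and all } w \in A^* \text{ with } \ell(w) \le d \,\bigr\}.
\]
Theorem~\ref{principal} gives $|q(a|w) - p(a|w)| \le c\epsilon$ uniformly in $w$ and $a$, hence on $E$ the triangle inequality yields
\[
\bigl|\Delta_n(w) - \max_{a \in A}|p(a|w) - p(a|\suf(w))|\bigr| \;\le\; 2(\eta+c\epsilon) \;=\; \min(\delta, D_d-\delta).
\]

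Two deterministic consequences then follow on $E$. First, if $w \in \tau$ with $\ell(w) \le d$, the definition of $D_d$ gives $\max_a |p(a|w) - p(a|\suf(w))| \ge D_d$, and so $\Delta_n(w) > \delta$. Second, if some proper suffix $v$ of $w$ lies in $\tau$, then both $w$ and $\suf(w)$ have $v$ as a suffix, so $p(a|w) = p(a|v) = p(a|\suf(w))$ for every $a$, and therefore $\Delta_n(w) \le \delta$. These two facts drive the tree-recovery argument.

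For $\tau|_K \subseteq \hat\tau_n^{\delta,d}|_K$, I fix $w_0 \in \tau|_K$; the hypothesis~(\ref{d}) on $d$ produces a context $v \in \tau$ with $v \succeq w_0$ and $\ell(v) \le d-1$, and the two consequences above imply $v \in \hat\tau_n^{\delta,d}$. Either $v = w_0$, in which case $w_0 \in \hat\tau_n^{\delta,d}|_K$ directly, or $v \succ w_0$, in which case the suffix property of $\tau$ forces $\ell(w_0) = K$ (otherwise $w_0 \in \tau$ and $v$ would be a strict extension of a context) and $w_0$ appears in $\hat\tau_n^{\delta,d}|_K$ via the truncation of $v$. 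For the reverse inclusion, the contrapositive of the second consequence shows that any $w \in \hat\tau_n^{\delta,d}$ has no proper suffix in $\tau$, while the first consequence applied to the short context above any such $w$ (whose existence is again guaranteed by~(\ref{d})) rules out the possibility that $w$ is a strict suffix of a context of length $\le d-1$, leaving only the cases that place $w$ or its length-$K$ suffix inside $\tau|_K$.

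The hardest step is the probabilistic bound on $P(E^c)$, which has to produce the specific constants appearing in the statement. This is a union bound over the at most $(|A|+1)|A|^d$ pairs $(w,a)$ with $\ell(w) \le d$, combined with a Hoeffding-type exponential inequality for empirical conditional probabilities of the perturbed chain. The lower bound on $n$ ensures that the smoothing correction in~(\ref{phat}) is negligible compared to $\eta$, given the minimum cylinder probability $q_d$ from~(\ref{ek}), and the exponential rate comes from adapting the concentration inequality of \citet{galves2007} to $\{Z_t\}$, using that the perturbed chain inherits a summable continuity rate from $\{X_t\}$ by Theorem~\ref{principal}. The main technical obstacle is matching the constants $256 e (1+\beta) |A|^2 (d+1)$ in the exponent and $4\ee (|A|+1)|A|^{d+1}$ in the prefactor, which requires careful bookkeeping in this adaptation.
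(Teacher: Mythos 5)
Your overall architecture --- a good event on which the empirical quantities $\hat q_n(a|w)$ are uniformly close to $q(a|w)$, Theorem~\ref{principal} to transfer that closeness to the unperturbed probabilities $p(a|w)$, a deterministic recovery of $\tau|_K$ on that event, and a union bound for its complement --- matches the paper's strategy, which merely organizes the union bound into separate overestimation and underestimation events (Lemmas~\ref{lemOn} and~\ref{lemUn}) instead of a single event $E$. Your deterministic part is essentially correct, up to the cosmetic point that your choice of $\eta$ only yields $\Delta_n(w)\ge\delta$ for contexts, whereas membership in $\hat\tau_n^{\delta,d}$ requires the strict inequality; this is fixed by working with the complementary events $|\hat q_n(a|w)-q(a|w)|>\eta$, as the paper does.

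The genuine gap is the concentration inequality itself, which you defer entirely, and the one concrete route you propose for it fails. You assert that ``the perturbed chain inherits a summable continuity rate from $\{X_t\}$ by Theorem~\ref{principal}'' and that the machinery of \citet{galves2007} can then be applied to $\{Z_t\}$ directly. But Theorem~\ref{principal} only says that $q(a|w_{-j}^{-1})$ is within $(1+\frac{4\beta\beta^*}{\alpha})\epsilon$ of $p(a|w_{-j}^{-1})$ uniformly in $j$; the best modulus of continuity for $q$ that this yields is of the form $\beta_j + 2(1+\frac{4\beta\beta^*}{\alpha})\epsilon$, which does not tend to zero as $j\to\infty$, let alone sum. (The paper itself notes that $\{Z_t\}$ is in general not even a chain of finite order, and no continuity rate for it is ever established.) The paper sidesteps this by never using the conditional structure of $Z$ alone: Lemma~\ref{mixing} conditions on the pair $(X_1^i,\xi_1^i)$, exploits the independence of the flipping noise to reduce everything to the mixing of the original chain $X$ (Lemma~3.6 of \citet{galves2007}, which does use the summable continuity of $p$), and Lemma~\ref{estim1} then feeds this into the moment inequality of Proposition~4 in \citet{DD} applied to the counting process written explicitly in terms of $(X_t,\xi_t)$. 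Without some substitute for this step your argument does not close, and the constants $256e(1+\beta)|A|^2(d+1)$ and $4\,\ee\,(|A|+1)|A|^{d+1}$, which you acknowledge you cannot yet produce, come precisely from these two lemmas.
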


As a consequence we obtain the following strong consistency result.

\begin{corollary}\label{main_cor}
 For any integer $K$ and for almost all infinite sample $z_1,z_2\dotsc$ 
 there exists a $\bar n$ such that, for any 
  $n\geq \bar n$ we have
  \begin{equation}
  \hat\tau_n^{\delta,d}|_K = \tau|_K,
  \end{equation}
  where $d$ is given by (\ref{d}) and $\delta$ is such that $2(1+\frac{4\beta\beta^*}{\alpha})\epsilon <\delta < D_d-2(1+\frac{4\beta\beta^*}{\alpha})\epsilon$.
  \end{corollary}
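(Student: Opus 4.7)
The plan is to deduce this corollary from Theorem~\ref{expobounded} by a standard first Borel--Cantelli argument. The whole content of the corollary is asymptotic in $n$, while Theorem~\ref{expobounded} already furnishes a finite-sample bound, so the only thing to do is to verify that the bound is summable in $n$ and then invoke Borel--Cantelli.

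First, I would check that the hypotheses of Theorem~\ref{expobounded} hold for all sufficiently large $n$. The parameters $K$, $d$, $\delta$, $\epsilon$, $\alpha$, $\beta$, $\beta^*$, and the alphabet size are all fixed once and for all (independently of $n$), and the restriction on $d$ and the two-sided restriction on $\delta$ stated in the corollary coincide with those required by Theorem~\ref{expobounded}. The lower bound on $n$ in Theorem~\ref{expobounded} is a single explicit threshold $n_0$, so it is met as soon as $n\geq n_0$. Crucially, the assumption
\begin{equation*}
2(1+\tfrac{4\beta\beta^*}{\alpha})\epsilon \;<\; \delta \;<\; D_d-2(1+\tfrac{4\beta\beta^*}{\alpha})\epsilon
\end{equation*}
makes the quantity $\min(\delta,D_d-\delta)-2\epsilon(1+\frac{4\beta\beta^*}{\alpha})$ strictly positive, which in turn makes the exponent in the bound of Theorem~\ref{expobounded} strictly negative. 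Hence there exist constants $C_1,C_2>0$, independent of $n$, such that for all $n\geq n_0$
\begin{equation*}
\P\bigl(\hat\tau_n^{\delta,d}|_K \neq \tau|_K\bigr) \;\leq\; C_1\exp\bigl(-C_2(n-d)\bigr).
\end{equation*}

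Second, I would sum this bound over $n\geq n_0$. The resulting series is a convergent geometric series, and therefore $\sum_{n=1}^{\infty}\P(\hat\tau_n^{\delta,d}|_K \neq \tau|_K) < +\infty$. By the first Borel--Cantelli lemma, the event $\{\hat\tau_n^{\delta,d}|_K \neq \tau|_K\}$ occurs for only finitely many $n$, almost surely. Equivalently, for $\P$-almost every sample $z_1,z_2,\dotsc$ there exists a (sample-dependent) $\bar n$ such that $\hat\tau_n^{\delta,d}|_K = \tau|_K$ for every $n\geq \bar n$, which is precisely the statement of the corollary.

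There is no real obstacle beyond what was already overcome in proving Theorem~\ref{expobounded}: the entire argument is a one-line application of Borel--Cantelli, whose only prerequisite is the strict positivity of the exponential rate, which is exactly what the hypotheses of the corollary guarantee.
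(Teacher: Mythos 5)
Your proposal is correct and follows exactly the paper's own argument: the paper also deduces the corollary from Theorem~\ref{expobounded} by noting that the error bound is summable in $n$ for fixed $d$ satisfying (\ref{d}) and applying the first Borel--Cantelli lemma. Your additional verification that the exponential rate is strictly positive under the stated restrictions on $\delta$ is a useful explicit check of what the paper leaves implicit.
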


 
\section{Proof of Theorem~\ref{principal}}


We start by proving three preparatory lemmas.

\begin{lemma}\label{lescon2}
For any $k>j\ge 0$ and any   $\epsilon\in [0,1]$  we have
\begin{align*}
\sup_{w_{-\infty}^{0},a,b}\; \bigl| \P\big(X_0=w_0&\given  
X_{-j}^{-1}=w_{-j}^{-1},X_{-j-1}=a,Z_{-j-1}=b,
Z_{-k}^{-j-2}=w_{-k}^{-j-2}\big) - p\big(w_0\given
w_{-\infty}^{-1}\big)\bigr|\\
& \leq\;\; \beta_j.
\end{align*}
\end{lemma}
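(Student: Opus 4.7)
My plan is to reduce the mixed conditioning to a conditioning on the full past $X_{-\infty}^{-1}$ by the tower property, and then to apply the log-continuity hypothesis. Let $\mathcal{G}$ denote the $\sigma$-field generated by the conditioning event in the statement. Since $(X_t)_{t\in\Z}$ is independent of $(\xi_t)_{t\in\Z}$, the entire $Z$-past is measurable with respect to $\sigma(X_{-\infty}^{-1}, \xi_{-\infty}^{-1})$, and the symbol $X_0$ is independent of $\xi_{-\infty}^{-1}$ given $X_{-\infty}^{-1}$. Consequently,
\[
\P(X_0 = w_0 \mid \mathcal{G}) \;=\; \E\bigl[\, p(w_0 \mid X_{-\infty}^{-1}) \,\big|\, \mathcal{G}\,\bigr].
\]

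On the event defining $\mathcal{G}$ we have $X_{-j}^{-1} = w_{-j}^{-1}$, so every realization of $X_{-\infty}^{-1}$ compatible with $\mathcal{G}$ shares a suffix of length $j$ with the fixed past $w_{-\infty}^{-1}$. The log-continuity assumption, applied at level $j$, then yields
\[
\bigl|\, p(w_0 \mid X_{-\infty}^{-1}) - p(w_0 \mid w_{-\infty}^{-1})\,\bigr| \;\leq\; \beta_j\, p(w_0 \mid w_{-\infty}^{-1}) \;\leq\; \beta_j
\]
almost surely on $\mathcal{G}$. Taking $\E[\,\cdot\mid\mathcal{G}]$ and using the triangle inequality gives the claimed bound uniformly in $w_{-\infty}^{0}$, $a$ and $b$; note that the specific values of $Z_{-j-1}$ and $Z_{-k}^{-j-2}$ only further restrict $\mathcal{G}$ and do not enter the estimate.

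The only delicate point is that $\beta_j$ is defined for contexts $v, w \in \tau$, whereas I need a bound valid for the pair of infinite pasts $X_{-\infty}^{-1}$ and $w_{-\infty}^{-1}$. This is resolved by observing that each infinite past has a unique associated context in $\tau$, and two infinite pasts sharing a suffix of length $j$ give rise to contexts that either coincide (by the suffix property, whenever both contexts have length at most $j$) or still share a suffix of length $j$; in both cases the definition of $\beta_j$ delivers the required bound on the transition probabilities. This is the main, but mild, obstacle; the rest of the argument is bookkeeping with the tower property.
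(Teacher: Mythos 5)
Your proof is correct and follows essentially the same route as the paper's: both arguments express the left-hand conditional probability as a weighted average of $p(w_0\mid \cdot)$ over pasts agreeing with $w$ on the last $j$ coordinates and then invoke log-continuity, the only difference being that the paper writes out the averaging explicitly as a ratio of sums over $u_{-k}^{-j-2}$ (sandwiching the finite-past conditionals between the inf and sup over infinite extensions of $w_{-j}^{-1}$), while you package the same step via the tower property and conditional independence of $X_0$ from $\xi_{-\infty}^{-1}$ given $X_{-\infty}^{-1}$. Your closing remark on extending $\beta_j$ from pairs of contexts in $\tau$ to pairs of infinite pasts is handled correctly (and is in fact a point the paper's own proof passes over silently).
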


\begin{proof}
We observe that for $j\geq0$ it follows from the independence of the flipping procedure that
\begin{align*}
\P\big(X_0=&w_0\,\big|\,
\,X_{-j}^{-1}=\,w_{-j}^{-1},X_{-j-1}=a,\,Z_{-j-1}=b,\,
 Z_{-k}^{-j-2}=\,w_{-k}^{-j-2}\big)
\\
&= \; \frac{\sum_{u_{-k}^{-j-2}}\P\big(X_{-k}^{0}=u_{-k}^{-j-2}a\, w_{-j}^{-1}w_0\big)
\P\big(Z_{-k}^{-j-1}=w_{-k}^{-j-2}b\given
X_{-k}^{-j-1}=u_{-k}^{-j-2}a\big)
}
 {\sum_{u_{-k}^{-j-2}}\P\big(X_{-k}^{-1}= u_{-k}^{-j-2}a\, w_{-j}^{-1}\big)
\P\big(Z_{-k}^{-j-1}=w_{-k}^{-j-2}b\given
X_{-k}^{-j-1}=u_{-k}^{-j-2}a\big)}.
\end{align*}
It is easy to see using conditioning on the infinite past that 
\begin{align*}
\inf_{v_{-\infty}^{-j-1}}\P\big(
X_0=w_0&\given X_{-j}^{-1}=w_{-j}^{-1},
X_{-\infty}^{-j-1}=v_{-\infty}^{-j-1}\big)
\\
&\leq\;\P\big(X_0=w_0\given X_{-k}^{-1}=u_{-k}^{-j-2}a\, w_{-j}^{-1}\big)
\\
&\leq\;\sup_{v_{-\infty}^{-j-1}}\P\big(
X_0=w_0\given X_{-j}^{-1}=w_{-j}^{-1},
X_{-\infty}^{-j-1}=v_{-\infty}^{-j-1}\big).
\end{align*}
Then, using continuity we have
\begin{align*}
p\big(w_0\given
w_{-\infty}^{-1}\big) - \beta_j \;&\leq\;\P\big(X_0=w_0\given X_{-k}^{-1}=u_{-k}^{-j-2}a\, w_{-j}^{-1}\big)
\leq\; p\big(w_0\given
w_{-\infty}^{-1}\big) +\beta_j
\end{align*}
and the assertion of the Lemma  follows immediately.
\end{proof}

\begin{lemma}\label{lescon3}
For any $\epsilon\in [0,1]$ and 
for any $k\ge 0$ we have 
\begin{equation*}
\inf_{w_{-k}^{0}}\P\big(Z_0= w_0\,\big|\,
Z_{-k}^{-1}=\,w_{-k}^{-1}\big)
\;\ge\; \alpha,
\end{equation*}
and
\begin{equation*}
\inf_{w_{-k}^{0}}\P\big(X_0= w_0\,\big|\,
Z_{-k}^{-1}=\,w_{-k}^{-1}\big)
\;\ge\; \alpha.
\end{equation*}
Moreover, for any
$0\le  j\le k$ we have
\begin{equation*}
\inf_{w_{-k}^{-1}}
\P\big(X_{-j-1}=w_{-j-1}\given X_{-j}^{-1}=w_{-j}^{-1},\,
Z_{-k}^{-j-2}=\,w_{-k}^{-j-2}\big)\;\ge\;\frac{\alpha}{\beta^*}.
\end{equation*}
\end{lemma}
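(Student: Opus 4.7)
The plan is to treat the three inequalities in the order middle, first, third, so as to use the middle one as a stepping stone for the first. For the middle bound I would condition on the underlying chain: since the flipping noise is independent of $X$, we have $X_0\perp Z_{-k}^{-1}\mid X_{-k}^{-1}$, hence
\[
\P\bigl(X_0=w_0\mid Z_{-k}^{-1}=w_{-k}^{-1}\bigr) \,=\, \sum_{u_{-k}^{-1}} \P\bigl(X_0=w_0\mid X_{-k}^{-1}=u_{-k}^{-1}\bigr)\,\P\bigl(X_{-k}^{-1}=u_{-k}^{-1}\mid Z_{-k}^{-1}=w_{-k}^{-1}\bigr).
\]
Each inner factor $\P(X_0=w_0\mid X_{-k}^{-1}=u_{-k}^{-1})$ is itself a convex combination of the infinite-past probabilities $p(w_0\mid x_{-\infty}^{-1})$, each $\ge \alpha$ by non-nullness, so the outer average inherits the same lower bound. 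The first inequality then follows at once from $Z_0=X_0\oplus\xi_0$ and the independence of $\xi_0$ from the past: $\P(Z_0=w_0\mid Z_{-k}^{-1})$ equals the convex combination $(1-\epsilon)\,\P(X_0=w_0\mid Z_{-k}^{-1})+\epsilon\,\P(X_0=\bar w_0\mid Z_{-k}^{-1})$, both of whose summands are at least $\alpha$, so the convex combination is also at least $\alpha$.

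The third inequality is the delicate one, and the factor $1/\beta^{*}$ strongly suggests that log-continuity has to be applied and accumulated at every one of the $j$ observed positions $-j,\dots,-1$. My plan is to use Bayes' formula and average over the hidden past $X_{-k}^{-j-2}$. Writing $\mu(u)=\P(X_{-k}^{-j-2}=u)\,\P(Z_{-k}^{-j-2}=w_{-k}^{-j-2}\mid X_{-k}^{-j-2}=u)$, the desired conditional probability becomes $N/D$ with
\[
N\,=\,\sum_{u}\mu(u)\,\P(X_{-j-1}=w_{-j-1}\mid X_{-k}^{-j-2}=u)\,\P(X_{-j}^{-1}=w_{-j}^{-1}\mid X_{-k}^{-j-1}=u\,w_{-j-1}),
\]
\[
D\,=\,\sum_{u,a}\mu(u)\,\P(X_{-j-1}=a\mid X_{-k}^{-j-2}=u)\,\P(X_{-j}^{-1}=w_{-j}^{-1}\mid X_{-k}^{-j-1}=u\,a).
\]
In $N$ I would lower-bound the one-step factor $\P(X_{-j-1}=w_{-j-1}\mid X_{-k}^{-j-2}=u)$ by $\alpha$, using the same averaging argument as for the middle inequality.

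It remains to upper-bound the forward block-probability appearing in $D$ by $\beta^{*}$ times its counterpart appearing in $N$. Expanding chronologically,
\[
\P\bigl(X_{-j}^{-1}=w_{-j}^{-1}\mid X_{-k}^{-j-1}=u\,a\bigr)\,=\,\prod_{t=-j}^{-1}\P\bigl(X_t=w_t\mid X_{-k}^{t-1}=u\,a\,w_{-j}^{t-1}\bigr),
\]
and at step $t$ the two conditioning strings (with $a$ versus $w_{-j-1}$ at the single position $-j-1$) share a common suffix of length $t+j$. The central technical point is the following finite-past version of log-continuity: whenever two finite pasts agree on their last $m$ symbols, any infinite extension of one shares those $m$ symbols with every infinite extension of the other, so by the suffix property of $\tau$ the corresponding infinite-past conditionals differ by at most a factor $1+\beta_m$, and the finite-past probability, which lies between the infimum and supremum over extensions of these infinite-past conditionals, inherits the same bound. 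Multiplying the per-step factors over $t=-j,\dots,-1$ yields a total factor at most $\prod_{s=0}^{j-1}(1+\beta_s)\le\beta^{*}$. Using also $\sum_a\P(X_{-j-1}=a\mid X_{-k}^{-j-2}=u)=1$ we conclude $D\le\beta^{*}\sum_u\mu(u)\,\P(X_{-j}^{-1}=w_{-j}^{-1}\mid X_{-k}^{-j-1}=u\,w_{-j-1})$, and combining with the lower bound on $N$ gives $N/D\ge\alpha/\beta^{*}$. The main obstacle is exactly this finite-past log-continuity step, since the conditional distribution of the deep past differs under the two conditionings; everything else is standard factorisation.
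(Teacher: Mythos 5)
Your proposal is correct and follows essentially the same route as the paper: the first inequality is reduced to the second via the convex combination in $\xi_0$, the second follows by averaging the non-nullness bound over the hidden $X$-past, and the third is obtained by Bayes' formula over the hidden past $X_{-k}^{-j-2}$, extracting one factor of $\alpha$ from the step at position $-j-1$ and accumulating the finite-past log-continuity bound $\prod_{s=0}^{j-1}(1+\beta_s)\le\beta^*$ over the $j$ observed positions. The only (immaterial) difference is bookkeeping: the paper bounds, for each fixed hidden past, the ratio of the joint probabilities $\P(X_{-j-1}^{-1}=w_{-j-1}^{-1},X_{-k}^{-j-2}=x)/\P(X_{-j}^{-1}=w_{-j}^{-1},X_{-k}^{-j-2}=x)$ by a telescoping product, whereas you bound $N$ below and $D$ above separately and sum over the value $a$ of $X_{-j-1}$ explicitly.
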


\begin{proof}
We first observe that 
\begin{equation*}
\P\big(Z_0= w_0\,\big|\,
Z_{-k}^{-1}=\,w_{-k}^{-1}\big)=(1-\epsilon)\,
\P\big(X_0= w_0\,\big|\, Z_{-k}^{-1}=\,w_{-k}^{-1}\big)
+\epsilon \,\P\big(X_0=\bar w_0\,\big|\, Z_{-k}^{-1}=\,w_{-k}^{-1}\big).
\end{equation*}
It is therefore enough to prove the second assertion. 
From the independence of the flipping procedure we have 
\begin{align*}
&\P\big(X_0= w_0\,\big|\,
Z_{-k}^{-1}=\,w_{-k}^{-1}\big)\; =
\\
&\lim_{l\to\infty}
\frac{(1-\epsilon)^{k}\sum_{u_{-l}^{-1}}
p\big(w_0\,\big|\,u_{-l}^{-1}w_{-\infty}^{-l-1}\big)
\P\big(
X_{-l}^{-1}=\,u_{-l}^{-1}\,\big|\,X_{-\infty}^{-l-1}=
w_{-\infty}^{-l-1}\big)(\epsilon/(1-\epsilon))^{\sum_{j=-k}^{-1}
  u_j\oplus w_j}
}
{(1-\epsilon)^{k}\sum_{u_{-l}^{-1}}
\P\big(
X_{-l}^{-1}=\,u_{-l}^{-1}\,\big|\,X_{-\infty}^{-l-1}=
w_{-\infty}^{-l-1}\big)(\epsilon/(1-\epsilon))^{\sum_{j=-k}^{-1}
  u_j\oplus w_j}
}
\\
&\ge\;\alpha.
\end{align*}
For the last assertion we first observe that
\begin{align*}
\P\big(&X_{-j-1}=w_{-j-1}\given X_{-j}^{-1}=w_{-j}^{-1},\,
Z_{-k}^{-j-2}=\,w_{-k}^{-j-2}\big)\\
& \frac{\sum_{x_{-k}^{-j-2}} 
\P\big(Z_{-k}^{-j-2}=w_{-k}^{-j-2}\given X_{-k}^{-j-2}=x_{-k}^{-j-2}\big)
\P\big(X_{-j-1}^{-1}=w_{-j-1}^{-1}, X_{-k}^{-j-2}=x_{-k}^{-j-2}\big)}
 {\sum_{x_{-k}^{-j-2}} 
\P\big(Z_{-k}^{-j-2}=w_{-k}^{-j-2}\given X_{-k}^{-j-2}=x_{-k}^{-j-2}\big)
\P\big(X_{-j}^{-1}=w_{-j}^{-1}, X_{-k}^{-j-2}=x_{-k}^{-j-2}\big)}.
\end{align*}
Moreover,
\begin{align*}
 &  \frac{
\P\big(X_{-j-1}^{-1}=w_{-j-1}^{-1}, X_{-k}^{-j-2}=x_{-k}^{-j-2}\big)}
 {\P\big(X_{-j}^{-1}=w_{-j}^{-1}, X_{-k}^{-j-2}=x_{-k}^{-j-2}\big)}\;=\\
& \frac{
\prod_{l=1}^{j+1}\P\big(X_{-l}=w_{-l} |\, X_{-j-1}^{-l-1}=w_{-j-1}^{-l-1},X_{-k}^{-j-2}=x_{-k}^{-j-2}\big)
 \prod_{l=j+2}^k\P\big(X_{-l}=w_{-l}|\, X_{-k}^{-l-1}=x_{-k}^{-l-1}\big)
}
 {\prod_{l=1}^j\P\big(X_{-l}=w_{-l} |\, X_{-j}^{-l-1}=w_{-j}^{-l-1},X_{-k}^{-j-2}=x_{-k}^{-j-2}\big)
 \prod_{l=j+2}^k\P\big(X_{-l}=w_{-l}|\, X_{-k}^{-l-1}=x_{-k}^{-l-1}\big)
 }\\
 &\geq\;\P\big(X_{-j-1}=w_{-j-1} |\,X_{-k}^{-j-2}=x_{-k}^{-j-2}\big)  \;\prod_{l=1}^j\frac{
   \P\big(X_{-l}=w_{-l} |\, X_{-j-1}^{-l-1}=w_{-j-1}^{-l-1},X_{-k}^{-j-2}=x_{-k}^{-j-2}\big) }
 {\P\big(X_{-l}=w_{-l} |\, X_{-j}^{-l-1}=w_{-j}^{-l-1},X_{-k}^{-j-2}=x_{-k}^{-j-2}\big)
 }
\end{align*} 
 and using non-nullness and log-continuity this is bounded below by  
 \[
  \alpha \; \prod_{l=1}^j\frac{1}{1+ \beta_{j-l}} \;\geq\; \frac{\alpha}{\beta^*}.
\]
This finishes the proof of the Lemma.
\end{proof}

\begin{lemma}\label{lescon4}
For any $k>j\ge 0$ and any $\epsilon\in [0,1]$
\begin{equation*}
\sup_{w_{-k}^{0}}\;\P\big(X_{-j-1}=\bar w_{-j-1}\,\big|\,
\,X_{-j}^{-1}=\,w_{-j}^{-1},\,
Z_{-k}^{-j-1}=\,w_{-k}^{-j-1}\big)
\;\le\;  \frac{\beta^*}{\alpha}\;\epsilon\,.
\end{equation*}
\end{lemma}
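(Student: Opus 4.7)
The plan is to reduce the lemma, via Bayes' rule, to bounding a ratio of two probabilities sharing the common conditioning event $\{X_{-j}^{-1}=w_{-j}^{-1},\,Z_{-k}^{-j-2}=w_{-k}^{-j-2}\}$, and then exploit the independence of the noise variable $\xi_{-j-1}$ from this conditioning and from $X_{-j-1}$ itself. Concretely, I would write
\[
\P\big(X_{-j-1}=\bar w_{-j-1}\,\big|\,X_{-j}^{-1}=w_{-j}^{-1},\,Z_{-k}^{-j-1}=w_{-k}^{-j-1}\big)
=\frac{\P\big(X_{-j-1}=\bar w_{-j-1},\,Z_{-j-1}=w_{-j-1}\,\big|\,X_{-j}^{-1}=w_{-j}^{-1},\,Z_{-k}^{-j-2}=w_{-k}^{-j-2}\big)}{\P\big(Z_{-j-1}=w_{-j-1}\,\big|\,X_{-j}^{-1}=w_{-j}^{-1},\,Z_{-k}^{-j-2}=w_{-k}^{-j-2}\big)}.
\]
Setting $p:=\P\big(X_{-j-1}=w_{-j-1}\,\big|\,X_{-j}^{-1}=w_{-j}^{-1},\,Z_{-k}^{-j-2}=w_{-k}^{-j-2}\big)$, the identity $Z_{-j-1}=X_{-j-1}\oplus\xi_{-j-1}$ combined with the independence of $\xi_{-j-1}$ from the conditioning and from $X_{-j-1}$ makes the numerator equal to $\epsilon(1-p)$ and, by partitioning the denominator according to the value of $X_{-j-1}$, makes the denominator equal to $(1-\epsilon)\,p+\epsilon\,(1-p)$.

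The decisive step is to apply Lemma~\ref{lescon3} twice: once as stated, which gives $p\ge\alpha/\beta^*$, and once with $w_{-j-1}$ replaced by $\bar w_{-j-1}$, which gives $1-p\ge\alpha/\beta^*$ (this is legitimate because the infimum in Lemma~\ref{lescon3} ranges over the full sequence $w_{-k}^{-1}$ and in particular over $w_{-j-1}$). The denominator is then a convex combination of $p$ and $1-p$, each at least $\alpha/\beta^*$, and is therefore itself bounded below by $\alpha/\beta^*$, uniformly in $\epsilon\in[0,1]$. Combined with the trivial upper bound $\epsilon(1-p)\le\epsilon$ on the numerator, this gives the claimed inequality $\epsilon\beta^*/\alpha$.

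The main subtlety, and the point where a naive argument would fail, is precisely this double use of Lemma~\ref{lescon3}. Using only the first bound $p\ge\alpha/\beta^*$ and discarding $\epsilon(1-p)$ from the denominator would give the weaker estimate $\epsilon\beta^*/[\alpha(1-\epsilon)]$, which blows up as $\epsilon\to 1$; the symmetric lower bound on $1-p$ is exactly what absorbs the stray factor of $(1-\epsilon)$ and leaves the clean bound $\epsilon\beta^*/\alpha$ stated in the lemma.
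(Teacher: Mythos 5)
Your proposal is correct and follows essentially the same route as the paper: Bayes' rule to express the conditional probability as $\epsilon(1-p)$ over $(1-\epsilon)p+\epsilon(1-p)$, followed by a double application of Lemma~\ref{lescon3} (to both $w_{-j-1}$ and $\bar w_{-j-1}$) to bound the convex combination in the denominator below by $\alpha/\beta^*$. The paper performs this double application implicitly in a single displayed chain, but the argument is identical, including the observation you highlight about why both lower bounds are needed.
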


\begin{proof}
We have
\begin{align*}
\P\big(&X_{-j-1}=\bar w_{-j-1}\given
X_{-j}^{-1}=\,w_{-j}^{-1},\,
Z_{-k}^{-j-1}=\,w_{-k}^{-j-1}\big)
\\
& = \;\frac{
\P\big(X_{-j-1}=\bar w_{-j-1},Z_{-j-1}= w_{-j-1}\given
X_{-j}^{-1}=w_{-j}^{-1},Z_{-k}^{-j-2}=\,w_{-k}^{-j-2}\big)}
{
\P\big(Z_{-j-1}= w_{-j-1}\given
X_{-j}^{-1}=w_{-j}^{-1},Z_{-k}^{-j-2}=w_{-k}^{-j-2}\big)
}\\
& = \;
\frac{\epsilon\; \P\big(X_{-j-1}=\bar w_{-j-1}\given
X_{-j}^{-1}=w_{-j}^{-1},Z_{-k}^{-j-2}=\,w_{-k}^{-j-2}\big)}
{
\P\big(Z_{-j-1}= w_{-j-1}\given
X_{-j}^{-1}=w_{-j}^{-1},Z_{-k}^{-j-2}=w_{-k}^{-j-2}\big)
}.
\end{align*}
It follows from Lemma~\ref{lescon3} that
\begin{align*}
\P\big(Z_{-j-1}= w_{-j-1}&\given
X_{-j}^{-1}=w_{-j}^{-1},Z_{-k}^{-j-2}=w_{-k}^{-j-2}\big)
\\
& = \;(1-\epsilon)\;
\P\big(X_{-j-1}= w_{-j-1}\given
X_{-j}^{-1}=w_{-j}^{-1},Z_{-k}^{-j-2}=w_{-k}^{-j-2}\big)
\\
&\quad+
\epsilon\;
\P\big(X_{-j-1}= \bar w_{-j-1}\given
X_{-j}^{-1}=w_{-j}^{-1},Z_{-k}^{-j-2}=w_{-k}^{-j-2}\big)
\\
&\geq\; \frac{\alpha}{\beta^*}
\end{align*}
This concludes the proof of Lemma~\ref{lescon4}.
\end{proof}

\begin{proof}[Proof of Theorem \ref{principal}] 
   We first observe that
  \[
  \P\big(Z_0=w_0\,\big|\, Z_{-k}^{-1}= w_{-k}^{-1}\big)
  =(1-\epsilon)\,\P\big(X_0=w_0\,\big|\, Z_{-k}^{-1}=
  w_{-k}^{-1}\big)+\epsilon \, \P\big(X_0=\bar w_0\,\big|\, Z_{-k}^{-1}=
  w_{-k}^{-1}\big).
  \]
  Therefore,
  \[
  \big|\P\big(Z_0=w_0\,\big|\, Z_{-k}^{-1}= w_{-k}^{-1}\big) -
  \P\big(X_0=w_0\,\big|\, Z_{-k}^{-1}= w_{-k}^{-1}\big)\big|\;\le\;\epsilon
  \]
  and if $k=0$ the Theorem is proved. We will now assume $k\ge1$ and we 
  write
  \begin{align*}
  \P\big(X_0=w_0\,\big|\, Z_{-k}^{-1}=& w_{-k}^{-1}\big)\,-\,
  \P\big(X_0=w_0\,\big|\, X_{-k}^{-1}=
  w_{-k}^{-1}\big)\\
 &  = \quad\sum_{j=0}^{k-1}
  \big[\P\big(X_0=w_0\,\big|\, X_{-j}^{-1}= w_{-j}^{-1},\, 
  Z_{-k}^{-j-1}= w_{-k}^{-j-1}\big)\\
  & \qquad\qquad- \;\P\big(X_0=w_0\,\big|\, X_{-j-1}^{-1}= w_{-j-1}^{-1},\, 
  Z_{-k}^{-j-2}= w_{-k}^{-j-2}\big)
  \big].
  \end{align*}
We will bound each term in the sum separately. We can write
\begin{align*}
\P\big(X_0=&w_0\given  X_{-j}^{-1}= w_{-j}^{-1},\,
Z_{-k}^{-j-1}= w_{-k}^{-j-1}\big)-
\P\big(X_0=w_0\given X_{-j-1}^{-1}= w_{-j-1}^{-1},\, 
Z_{-k}^{-j-2}= w_{-k}^{-j-2}\big)
\\[.2cm]
&=\;\sum_{b\in\{0,1\}}\big[
\P\big(X_0=w_0 \given X_{-j}^{-1}= w_{-j}^{-1},\, X_{-j-1}=b,\,
Z_{-k}^{-j-1}= w_{-k}^{-j-1}\big)
\\
&\mspace{110mu}-\;\P\big(X_0=w_0 \given X_{-j-1}^{-1}= w_{-j-1}^{-1},\, 
Z_{-k}^{-j-2}= w_{-k}^{-j-2}\big)\big]\\[.2cm]
&\qquad\times \P\big(X_{-j-1}=b \given X_{-j}^{-1}= w_{-j}^{-1},\, 
Z_{-k}^{-j-1}= w_{-k}^{-j-1}\big).
\end{align*}
The above sum is a sum of two terms, one with $b=\bar w_{-j-1}$, the
other one with $b= w_{-j-1}$. We will bound above these two terms
separately. For the first term we have the bound
\begin{align*}
\big|
\P\big(X_0=&w_0\given X_{-j}^{-1}= w_{-j}^{-1},\, X_{-j-1}=
\bar w_{-j-1},\,
Z_{-k}^{-j-1}= w_{-k}^{-j-1}\big)
\\[.1cm]
&\mspace{70mu}-\;
\P\big(X_0=w_0\,\big|\, X_{-j-1}^{-1}= w_{-j-1}^{-1},\, 
Z_{-k}^{-j-2}= w_{-k}^{-j-2}\big)\big|
\\[.1cm]
&\times \;
\P\big(X_{-j-1}=\bar w_{-j-1}\,\big|\, X_{-j}^{-1}= w_{-j}^{-1},\, 
Z_{-k}^{-j-1}= w_{-k}^{-j-1}\big)\;\le\; \frac{2\,\beta_j\beta^*}{\alpha}\,\epsilon 
\end{align*}
from Lemma~\ref{lescon2} and Lemma~\ref{lescon4}.
 For the other term we can write
\begin{align*}
&\big|
\P\big(X_0=w_0\given X_{-j}^{-1}= w_{-j}^{-1},\, X_{-j-1}=
 w_{-j-1},\,
Z_{-k}^{-j-1}= w_{-k}^{-j-1}\big)
\\
&\mspace{100mu}-\;
\P\big(X_0=w_0\,\big|\, X_{-j-1}^{-1}= w_{-j-1}^{-1},\, 
Z_{-k}^{-j-2}= w_{-k}^{-j-2}\big)\big|
\\
&\qquad\times \;
\P\big(X_{-j-1}= w_{-j-1}\,\big|\, X_{-j}^{-1}= w_{-j}^{-1},\, 
Z_{-k}^{-j-1}= w_{-k}^{-j-1}\big)
\\
&\le \sum_{a\in\{0,1\}}\big|
\P\big(X_0=w_0\,\big|\, X_{-j}^{-1}= w_{-j}^{-1},\, X_{-j-1}=
 w_{-j-1},\,
Z_{-k}^{-j-1}= w_{-k}^{-j-1}\big)\\[-.1cm]
&\mspace{100mu}-\;\P\big(X_0=w_0\,\big|\, X_{-j-1}^{-1}= w_{-j-1}^{-1},\, Z_{-j-1}=a,\,
Z_{-k}^{-j-2}= w_{-k}^{-j-2}\big)\big|
\\
&\qquad\times\;
\P\big(Z_{-j-1}=a\,\big|\, X_{-j-1}^{-1}= w_{-j-1}^{-1},\, 
Z_{-k}^{-j-2}= w_{-k}^{-j-2}\big)
\\
&\qquad\times \;
\P\big(X_{-j-1}= w_{-j-1}\,\big|\, X_{-j}^{-1}= w_{-j}^{-1},\, 
Z_{-k}^{-j-1}= w_{-k}^{-j-1}\big).
\end{align*}
Using the fact that the term in the sum with
$a=w_{-j-1}$ vanishes this is bounded above by 
\begin{align*}
\big|
\P\big(X_0=w_0\,&\big|\, X_{-j}^{-1}= w_{-j}^{-1},\, X_{-j-1}=
 w_{-j-1},\,
Z_{-k}^{-j-1}= w_{-k}^{-j-1}\big)
\\
&\mspace{100mu}-\;\P\big(X_0=w_0\,\big|\, X_{-j-1}^{-1}= w_{-j-1}^{-1},\,
Z_{-j-1}=\bar w_{-j-1},\,
Z_{-k}^{-j-2}= w_{-k}^{-j-2}\big)\big|
\\
&\qquad\times\;
\P\big(Z_{-j-1}=\bar w_{-j-1}\,\big|\, X_{-j-1}^{-1}= w_{-j-1}^{-1},\, 
Z_{-k}^{-j-2}= w_{-k}^{-j-2}\big)\;\le\;  2\,\beta_j\,\epsilon 
\end{align*}
from Lemma \ref{lescon2}.
Putting all the above bounds together we get
\begin{align*}
\big|\P\big(Z_0=w_0\,\big|\, Z_{-k}^{-1}= w_{-k}^{-1}\big) -
\P\big(X_0=w_0\,\big|\, X_{-k}^{-1}= w_{-k}^{-1}\big)\big|
\le\; \epsilon+ \frac{2\beta\beta^*}{\alpha} \epsilon + 2\beta\epsilon
\end{align*}
and the Theorem  follows.
\end{proof}


\section{Proof of  Theorem~\ref{expobounded}}


We start by proving four new auxiliary Lemmas.

\begin{lemma}\label{mixing}
  For any $i\geq 1$, any $k > i$, any $j\geq 1$ and any
  finite sequence $w_1^j$, the following inequality holds
  \begin{equation*}
    \sup_{x_1^{i},\,\theta_1^i\in A^i} 
    |\P(Z_{k}^{k+j-1}=w_1^j \given X_1^i=x_1^i,\,\xi_1^i=\theta_1^i)-q(w_1^j)|\;
    \leq \,j\, \beta_{k-i-1}\,.
  \end{equation*}
\end{lemma}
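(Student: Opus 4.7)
My plan is to first eliminate the $\xi$--conditioning and appeal to stationarity, then telescope, and finally control one--step differences via log-continuity.

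The first observation is that $\xi_1^i$ is independent of the $X$--process and of $\{\xi_t\}_{t\ge k}$, so it drops out of the conditioning,
\[
\P(Z_k^{k+j-1}=w_1^j\mid X_1^i=x_1^i,\,\xi_1^i=\theta_1^i) = \P(Z_k^{k+j-1}=w_1^j\mid X_1^i=x_1^i),
\]
and by stationarity $q(w_1^j)=\P(Z_k^{k+j-1}=w_1^j)$. The task becomes bounding the difference of these two quantities by $j\beta_{k-i-1}$.

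Next I would telescope both probabilities into products of one-step conditionals,
\begin{align*}
\P(Z_k^{k+j-1}=w_1^j\mid X_1^i=x_1^i) &= \prod_{l=0}^{j-1}\P(Z_{k+l}=w_{l+1}\mid X_1^i=x_1^i,\,Z_k^{k+l-1}=w_1^l),\\
\P(Z_k^{k+j-1}=w_1^j) &= \prod_{l=0}^{j-1}\P(Z_{k+l}=w_{l+1}\mid Z_k^{k+l-1}=w_1^l),
\end{align*}
and apply the elementary inequality $|\prod a_l-\prod b_l|\le\sum_l|a_l-b_l|$ for $a_l,b_l\in[0,1]$. It is then enough to show that each one-step difference is at most $\beta_{k-i-1}$.

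For the per-step bound I write $\P(Z_{k+l}=c\mid\cdot) = (1-\epsilon)\P(X_{k+l}=c\mid\cdot)+\epsilon\P(X_{k+l}=\bar c\mid\cdot)$ (using independence of $\xi_{k+l}$), which reduces the task to controlling $|\P(X_{k+l}=a\mid X_1^i=x_1^i,Z_k^{k+l-1}=w_1^l)-\P(X_{k+l}=a\mid Z_k^{k+l-1}=w_1^l)|$. Both quantities can be expanded as weighted averages of the full-past transition probabilities $\P(X_{k+l}=a\mid X_1^{k+l-1})$, the weights being the respective conditional laws of the intermediate block $X_{i+1}^{k+l-1}$. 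The key point is that any two conditioning sequences appearing in these averages that agree on positions $i+1,\ldots,k+l-1$ share a common suffix of length $k+l-i-1\ge k-i-1$, so by log-continuity the corresponding transition probabilities differ by at most $\beta_{k-i-1}$. Substituting each transition probability by a common reference value $\P(X_{k+l}=a\mid X_{i+1}^{k+l-1}=\cdot)$, up to an error controlled by $\beta_{k-i-1}$, the remaining weights (being probability measures summing to one) cancel.

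The main obstacle is precisely this last step: the two averages use different conditional measures on $X_{i+1}^{k-1}$ (one conditional on $X_1^i=x_1^i$, the other marginal), and one must choose the reference value carefully so that the mismatch in weights is absorbed into the $\beta_{k-i-1}$ error rather than contributing an independent term. Once this per-step bound is secured, summing over $l=0,\ldots,j-1$ produces the claimed bound $j\beta_{k-i-1}$.
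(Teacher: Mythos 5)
Your reduction steps (dropping $\xi_1^i$ from the conditioning by independence, invoking stationarity, and telescoping the block probability into one-step conditionals) are all sound, but they concentrate the entire content of the lemma in the per-step claim
\[
\bigl|\P(X_{k+l}=a\mid X_1^i=x_1^i,\,Z_k^{k+l-1}=w_1^l)-\P(X_{k+l}=a\mid Z_k^{k+l-1}=w_1^l)\bigr|\;\le\;\beta_{k-i-1},
\]
and the argument you offer for that claim does not close. After replacing each transition probability by the reference value $g(y)=\P(X_{k+l}=a\mid X_{i+1}^{k+l-1}=y)$ (which log-continuity does permit, at cost $\beta_{k-i-1}$ per term), you are left comparing $\sum_y g(y)\,\mu_1(y)$ with $\sum_y g(y)\,\mu_2(y)$, where $\mu_1$ and $\mu_2$ are the conditional laws of the intermediate block given, respectively, $\{X_1^i=x_1^i,\,Z_k^{k+l-1}=w_1^l\}$ and $\{Z_k^{k+l-1}=w_1^l\}$. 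The weights do not ``cancel'': $g$ is not constant in $y$ (different intermediate blocks produce genuinely different contexts, so $g$ oscillates by an amount of order one), and $\mu_1$ and $\mu_2$ differ by an amount of order one on the coordinates nearest to time $i$, which is not controlled by $\beta_{k-i-1}$. You correctly identify this as ``the main obstacle,'' but you do not resolve it; as written, the per-step bound is an unproven assertion of essentially the same depth as the lemma itself (its $l=0$, noiseless specialization is already the one-symbol case of the mixing estimate being proved).

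The paper sidesteps this difficulty with a different first move: it sums over the values $x_k^{k+j-1}$ of the unperturbed chain and uses the independence of the flips to factor out $\P(Z_k^{k+j-1}=w_1^j\mid X_k^{k+j-1}=x_k^{k+j-1})$, which is the same kernel in both terms. This eliminates the noise entirely and reduces the statement to the loss-of-memory bound $|\P(X_k^{k+j-1}=x_k^{k+j-1}\mid X_1^i=x_1^i)-\P(X_k^{k+j-1}=x_k^{k+j-1})|\le j\,\beta_{k-i-1}\,\P(X_k^{k+j-1}=x_k^{k+j-1})$ for the chain $\{X_t\}$ alone, which is quoted as Lemma~3.6 of \citet{galves2007}. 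To repair your proof you would need either to cite that estimate or to prove it, and in either case the noise should be integrated out first: keeping $Z_k^{k+l-1}$ in the conditioning turns each per-step estimate into a filtering problem for the hidden chain that is strictly harder than the unperturbed mixing statement, and log-continuity plus resummation alone does not deliver it.
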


\begin{proof}
Observe that for any $x_1^{i},\,\theta_1^i\in A^i$ 
\begin{align*}
 |\P(&Z_{k}^{k+j-1}=w_1^j \given X_1^i=x_1^i,\,\xi_1^i=\theta_1^i)-q(w_1^j)|\\[.1cm]
 & = \;|  \sum_{x_{k}^{k+j-1}\in A^j} \P(X_k^{k+j-1}=x_k^{k+j-1},Z_{k}^{k+j-1}=w_1^j \given X_1^i=x_1^i,\xi_1^i=\theta_1^i)-q(w_1^j)|\\
 & = \; | \sum_{x_{k}^{k+j-1}\in A^j} 
  \P(Z_{k}^{k+j-1}=w_1^j \given X_k^{k+j-1}=x_k^{k+j-1})
  \P(X_k^{k+j-1}=x_k^{k+j-1} \given X_1^i=x_1^i,\xi_1^i=\theta_1^i)\\
  &\mspace{100mu}-q(w_1^j)|
 \end{align*}
 by the independence of the flipping procedure. The last term can be bounded above by
 \begin{align*}
  & = \;  \sum_{x_{k}^{k+j-1}\in A^j}
   \P(Z_{k}^{k+j-1}=w_1^j \given X_k^{k+j-1}=x_k^{k+j-1})\; |\,
     \P(X_k^{k+j-1}=x_k^{k+j-1} \given X_1^i=x_1^i)\\
     &\mspace{120mu} -  
      \P(X_k^{k+j-1}=x_k^{k+j-1})\,|.
      \end{align*}
      Then, we can use Lemma~3.6 in \citet{galves2007} to bound above the last sum with
      \begin{equation*}
 \sum_{x_{k}^{k+j-1}\in A^j} 
  j \,\beta_{k-i-1} \; \P(X_k^{k+j-1}=x_k^{k+j-1})
 \end{equation*}
 We conclude the proof of Lemma~\ref{mixing}. 
\end{proof}

\begin{lemma}\label{estim1} 
  For any finite sequence $w$ and any $t>0$ we have
  \begin{equation*}
    \P(\,|N_n(w)-(n-\ell(w)+1)q(w)|\,>\,t\,)\,\leq \,e^{\frac1e} 
    \exp \bigl[\frac{-t^2}{4e(1+\beta)\ell(w)(n-\ell(w)+1)}\bigr].
  \end{equation*}
  Moreover, for any $a\in A$ and any $n> \frac{|A|+1}{tq(w)} + \ell(w)$
  we have
    \begin{align*}
  \P\bigl(|\hat{q}_n(a|w)-&q(a|w)|>t \bigl)\;\leq\notag\\
  &(|A|+1)\, \ee \exp \bigl[- (n\!-\!\ell(w)\!+\!1)\;\frac{[t-\frac{|A|+1}{(n\!-\!\ell(w)\!+\!1)q(w)}]^2 [q(w)+\frac{|A|}{n\!-\!\ell(w)\!+\!1}]^2}{16e(1+\beta)|A|^2(\ell(w)+1)}\bigl].
\end{align*}
\end{lemma}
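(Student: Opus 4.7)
The quantity $N_n(w) = \sum_{t=0}^{n-\ell(w)} \Id\{Z_{t+1}^{t+\ell(w)}=w\}$ is a sum of Bernoulli indicators whose windows overlap whenever two starting indices differ by less than $\ell(w)$, and which inherit long-range dependence from the chain $\{Z_t\}$. The standard route is to split $\{0,1,\ldots,n-\ell(w)\}$ into $\ell(w)$ arithmetic progressions of common step $\ell(w)$; within each class the windows are disjoint, so only the memory of the chain remains to be controlled. Invoking Lemma~\ref{mixing}, which transfers the summable continuity rate of $\{X_t\}$ (with total mass $\beta$) to the perturbed chain $\{Z_t\}$, one can apply a Hoeffding-type inequality for Bernoulli sequences with summable continuity rate, in the vein of the concentration bound used in \citet{galves2007}, to each of the $\ell(w)$ classes. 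Summing the resulting bounds yields the sub-Gaussian tail with variance proxy $4e(1+\beta)\ell(w)(n-\ell(w)+1)$, while the prefactor $e^{1/e}$ emerges from the optimization in the underlying Hoeffding step.

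\textbf{Plan for the second inequality.} I would start from the algebraic identity
\[
\hat q_n(a|w) - q(a|w) \;=\; \frac{\bigl[N_n(wa) - q(a|w)\, N_n(w\cdot)\bigr] + \bigl[1 - |A|\, q(a|w)\bigr]}{N_n(w\cdot)+|A|}.
\]
Since $\E N_n(wa) = q(a|w)\,\E N_n(w\cdot)$ and $N_n(w\cdot) = \sum_{b\in A} N_n(wb)$, the numerator is dominated in absolute value by $\sum_{b\in A} |N_n(wb)- \E N_n(wb)|+|A|$. Fix a deviation level $s$ and consider the good event $G$ on which $|N_n(wb)-\E N_n(wb)| \le s$ for every $b\in A$, which automatically gives $N_n(w\cdot)+|A| \ge \E N_n(w\cdot)+|A|-|A|s$. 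A direct estimate of the ratio then shows that $|\hat q_n(a|w)-q(a|w)| > t$ can hold on $G$ only when
\[
s \;\ge\; \frac{1}{2|A|}\,\bigl[t - \tfrac{|A|+1}{(n-\ell(w)+1)q(w)}\bigr]\,\bigl[(n-\ell(w)+1)q(w)+|A|\bigr];
\]
the hypothesis $n>(|A|+1)/[tq(w)]+\ell(w)$ is exactly what makes the right-hand side strictly positive, allowing $s$ to be chosen equal to it.

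\textbf{Finishing.} To turn this deterministic implication into a probability bound, I would take a union bound over the $|A|$ events $\{|N_n(wb)-\E N_n(wb)|>s\}$ for $b\in A$, together with one extra concentration event needed to sharpen the lower bound on the denominator; this accounts for the $(|A|+1)$ prefactor. Each of the $|A|+1$ events is then estimated via the first inequality applied to strings of length $\ell(w)+1$, producing the factor $(\ell(w)+1)$ in the denominator of the exponent. Squaring the chosen $s$ reproduces the claimed $[\,t-\frac{|A|+1}{(n-\ell(w)+1)q(w)}\,]^2[\,q(w)+\frac{|A|}{n-\ell(w)+1}\,]^2$ in the numerator of the exponent, and the final $16=4\cdot 4$ in the denominator comes from the $(2|A|)^2$ scaling of $s^2$ relative to the first inequality's $4e(1+\beta)(\ell(w)+1)(n-\ell(w))$.

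\textbf{Main obstacle.} The delicate step is the first inequality: obtaining the clean $(1+\beta)$ in the variance proxy (rather than a quantity that blows up with $n$ or $\ell(w)$) requires combining the blocking decomposition for the window overlaps with the continuity-rate transfer provided by Lemma~\ref{mixing} inside a Hoeffding-type concentration inequality designed for chains with summable continuity rate. Once this is in hand, the passage to the second inequality is essentially algebraic bookkeeping together with a union bound.
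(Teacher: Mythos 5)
Your plan for the second inequality is essentially the paper's (which defers to \citet{galves2007}): the algebraic identity for $\hat q_n(a|w)-q(a|w)$, the cancellation $\E N_n(wa)=q(a|w)\,\E N_n(w\cdot)$, and a union bound over $|A|+1$ deviation events, each controlled by the first inequality applied to strings of length $\ell(w)+1$. That part is sound modulo bookkeeping.

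The first inequality is where you diverge from the paper and where your argument has a concrete deficiency. The paper does not block at all: it centers the overlapping indicator sum, writes $U_t$ for the summand minus $q(w)$, and applies the moment inequality of Proposition~4 of \citet{DD} directly to the full sum, obtaining $\Vert N_n(w)-(n-\ell(w)+1)q(w)\Vert_r \le [2r(1+\beta)\ell(w)(n-\ell(w)+1)]^{1/2}$ for all $r\ge 2$; the only probabilistic input is the bound $\sup|\E(U_k\mid X_1^i,\xi_1^i)|\le \ell(w)\,\beta_{k-i-1}$ from Lemma~\ref{mixing}, and the overlap of the windows costs nothing because the moment inequality only requires the summed conditional expectations. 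The prefactor $e^{1/e}$ then comes from optimizing Markov's inequality over $r$, as in \citet{galves2007}. Your blocking scheme has two problems. First, recombining the $\ell(w)$ residue classes forces a union bound at level $t/\ell(w)$ per class; even granting that each class of $(n-\ell(w)+1)/\ell(w)$ disjoint-window terms obeys the analogous sub-Gaussian bound, this reproduces the exponent but multiplies the prefactor by $\ell(w)$, so the lemma as stated (with prefactor exactly $e^{1/e}$) is not recovered --- harmless for Theorem~\ref{expobounded}, but a genuine gap against the claim. Second, and more importantly, the ``Hoeffding-type inequality for Bernoulli sequences with summable continuity rate'' you invoke within each class is not an off-the-shelf tool here: the within-class indicators, though built on disjoint windows, remain dependent through the chain and the noise, and making that step rigorous is precisely the Dedecker--Doukhan moment argument combined with Lemma~\ref{mixing} that the paper runs on the whole sum at once. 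Once that black box is opened, the blocking buys nothing and only degrades the constant; you should drop it and apply the moment inequality to the overlapping sum directly.
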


\begin{proof}
Observe that for any finite sequence 
$w_1^j\in A^j$ 
\begin{equation*}
N_n(w_1^j) \;= \;\sum_{t=0}^{n-j}\; \prod_{i=1}^{j}\;[\,\Id_{\{X_{t+i}=w_i\}}\Id_{\{\xi_{t+i}=0\}}\,+\,
\Id_{\{X_{t+i}=\bar{w_i}\}}\Id_{\{\xi_{t+i}=1\}}\,].
\end{equation*}  
Define the process $\{U_t\colon t\in\Z\}$ by  
\begin{equation*}
U_t \;=\; \prod_{i=1}^{j}\;[\,\Id_{\{X_{t+i-1}=w_i\}}\Id_{\{\xi_{t+i-1}=0\}}\,+\,
\Id_{\{X_{t+i-1}=\bar{w_i}\}}\Id_{\{\xi_{t+i-1}=1\}}\,] - q(w_1^j)
\end{equation*}
and denote by $\MM_i$ the $\sigma$-algebra generated by $U_1,\dotsc,U_i$.
Applying Proposition 4 in \citet{DD} we obtain that, for any $r\geq 2$
\begin{align*}
\Vert N_n(w_1^j&)-(n-j+1)\/q(w_1^j)\Vert_r\\
&\leq\; \Bigl( \,2r \, \sum_{i=1}^{n-j+1} \,\max_{i\leq\ell\leq n-j+1}\Vert U_i \/ 
\sum_{k=i}^\ell \E(U_k | \MM_i) \Vert_\frac{r}2\Bigr)^{\frac12}\\
&\leq\; \Bigl( \,2r \, \sum_{i=1}^{n-j+1} \Vert 
U_i\Vert_\frac{r}2 \/ \sum_{k=i}^{n-j+1} \Vert \E(U_k | \MM_i) 
\Vert_\infty\Bigr)^{\frac12}\\
&\leq\;\Bigl( \,2r\, \sum_{i=1}^{n-j+1}\sum_{k=i}^{n-j+1} 
\sup_{\sigma_1^i\in A^{i}}
|\E(U_k | U_1^i = \sigma_1^i)|\Bigr)^{\frac12}\\
&\leq\;\Bigl( \,2r\, \sum_{i=1}^{n-j+1}\sum_{k=i}^{n-j+1} 
\sup_{x_1^i,\,\theta_1^i\in A^{i}}
|\E(U_k | X_1^i = x_1^i,\xi_1^i=\theta_1^i)|\Bigr)^{\frac12}\\
&\leq\;\Bigl( \,2r\, \sum_{i=1}^{n-j+1}\sum_{k=i}^{n-j+1} 
\sup_{x_1^i,\,\theta_1^i\in A^{i}}
|\P(Z_k^{k+j-1}=w_1^j | X_1^i=x_1^i,\xi_1^i=\theta_1^i)-q(w_1^j)|\Bigr)^{\frac12}
\end{align*}
Using Lemma~\ref{mixing} we can bound above the last expression by
\begin{equation*}
[2r(1+\beta)\ell(w)(n-j+1)]^\frac12.
\end{equation*}
Then, as in \citet{galves2007} we obtain 
\begin{equation*}
  \P(\,|N_n(w)-(n-\ell(w)+1)q(w)|\,>\,t\,)\,\leq \,e^{\frac1e} 
    \exp \bigl[\frac{-t^2}{4e(1+\beta)\ell(w)(n-\ell(w)+1)}\bigr]
\end{equation*}
and
\begin{align*}
  \P\bigl(|\hat{q}_n(a|w)-&q(a|w)|>t \bigl)\;\leq\notag\\
  &(|A|+1)\, \ee \exp \bigl[- (n\!-\!\ell(w)\!+\!1)\;\frac{[t-\frac{|A|+1}{(n\!-\!\ell(w)\!+\!1)q(w)}]^2 [q(w)+\frac{|A|}{n\!-\!\ell(w)\!+\!1}]^2}{16e(1+\beta)|A|^2(\ell(w)+1)}\bigl].
\end{align*}
    This concludes the proof of Lemma~\ref{estim1}
\end{proof}

\begin{lemma}\label{lemOn}
For any $\delta > 2(1+\frac{4\beta\beta^*}{\alpha})\epsilon$, for any 
\[
n\; > \; \frac{2(|A|+1)}{(\frac\delta2-\epsilon\,(1+\frac{4\beta\beta^*}{\alpha}) )q_d}+d
\]
and for any $w \in \tau$, $uw\in\hat{\tau}_n^{\delta,d}$
we have that
  \begin{align*}
  \P( \Delta_n(uw) >\delta  )\;\leq \;   2\, |A|\,(|A|+1) \,\ee \exp \bigl[- (n\!-\!d)\;\frac{[\frac{\delta}{2}- \epsilon\,(1+\frac{4\beta\beta^*}{\alpha})]^2 q_d^2}{32e(1+\beta)|A|^2(d+1)}\bigl].
  \end{align*}
  \end{lemma}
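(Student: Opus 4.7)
The plan is to combine Theorem~\ref{principal} with the concentration bound of Lemma~\ref{estim1}. The key observation is that, since $w\in\tau$ is a context and we are extending it by a prefix $u$ with $\ell(u)\ge 1$, both $uw$ and $\suf(uw)$ end in $w$. Consequently, the unperturbed conditional probabilities coincide: $\P(X_0=a\given X_{-\ell(uw)}^{-1}=uw)=\P(X_0=a\given X_{-\ell(\suf(uw))}^{-1}=\suf(uw))=p(a|w)$ for every $a\in A$. Applying Theorem~\ref{principal} with $j=\ell(uw)$ and $j=\ell(\suf(uw))$ (both at most $d$, since $uw\in\hat{\tau}_n^{\delta,d}$), one gets $|q(a|uw)-p(a|w)|\le C\epsilon$ and $|q(a|\suf(uw))-p(a|w)|\le C\epsilon$, where $C:=1+\frac{4\beta\beta^*}{\alpha}$. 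A triangle inequality then yields $|q(a|uw)-q(a|\suf(uw))|\le 2C\epsilon$.

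The second step is to reduce $\{\Delta_n(uw)>\delta\}$ to fluctuations around these true transition probabilities. If $|\hat q_n(a|uw)-\hat q_n(a|\suf(uw))|>\delta$ for some $a$, then by the triangle inequality and the previous bound,
\[
|\hat q_n(a|uw)-q(a|uw)|+|\hat q_n(a|\suf(uw))-q(a|\suf(uw))|>\delta-2C\epsilon,
\]
so at least one of the two summands exceeds $t:=\delta/2-C\epsilon$, which is strictly positive under the hypothesis $\delta>2C\epsilon$. A union bound over $a\in A$ and over the two sequences $v\in\{uw,\suf(uw)\}$ replaces $\{\Delta_n(uw)>\delta\}$ by $2|A|$ events of the form $\{|\hat q_n(a|v)-q(a|v)|>t\}$.

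I would then apply the second estimate of Lemma~\ref{estim1} to each of these $2|A|$ events. Since $\ell(v)\le d$ and $q(v)\ge q_d>0$ (the latter follows from $\epsilon>0$, which makes every cylinder of length $\le d$ carry positive $q$-mass), the hypothesis $n>\frac{2(|A|+1)}{tq_d}+d$ forces $\frac{|A|+1}{(n-\ell(v)+1)q(v)}\le t/2$, so the numerator appearing in Lemma~\ref{estim1} is bounded below by $(t/2)^2 q_d^2$, while $n-\ell(v)+1\ge n-d$ and $\ell(v)+1\le d+1$. Summing the $2|A|$ exponential bounds yields the claimed inequality.

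The main obstacle is not conceptual but purely bookkeeping: getting the numerical constants in the exponent exactly right demands attention to each of the simplifications above (the factor $1/4$ coming from $[t-\cdot]^2\ge t^2/4$, the replacement of $q(v)$ by $q_d$, and the replacement of $\ell(v)$ by $d$) and careful tracking of the $2|A|$ summands produced by the union bound. No new probabilistic ingredient beyond Theorem~\ref{principal} and Lemma~\ref{estim1} is required; the proof is essentially the composition of a deterministic comparison (Theorem~\ref{principal}) with an empirical concentration estimate (Lemma~\ref{estim1}).
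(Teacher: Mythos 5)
Your proposal is correct and follows essentially the same route as the paper's proof: the observation that $p(a|uw)=p(a|\suf(uw))$ because $w\in\tau$, a four-term triangle inequality combined with Theorem~\ref{principal} applied to both $uw$ and $\suf(uw)$, a union bound over $a\in A$ and the two strings, and the second estimate of Lemma~\ref{estim1}. The only caveat is the constant: the chain of simplifications you describe (in particular $[t-\cdot]^2\ge t^2/4$) yields $64$ rather than $32$ in the denominator of the exponent, but this discrepancy is already present in the paper itself, whose Theorem~\ref{expobounded} uses the constant consistent with $64$.
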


\begin{proof}
 Recall that 
  \[
  \Delta_n(uw) = \max_{a\in A} |\hat{q}_n(a|uw)-\hat{q}_n(a|\suf(uw))|.
  \]  
  Note that the fact $w\in\tau$ implies that
  for any finite sequence $u$ and any symbol
   $a\in A$ we have $p(a|uw)=p(a|\suf(uw))$.
  Hence,
  \begin{align*}
     |\hat{q}_n(a|uw)-\hat{q}_n(a|\suf(uw))| \;\leq \;\;&
      |\hat{q}_n(a|uw)-q(a|uw)| \; + \;  |q(a|uw)-p(a|uw)| \\
      &+\; |q(a|\suf(uw))-p(a|\suf(uw))|\\
      & +\;  |\hat{p}_n(a|\suf(uw))-q(a|\suf(uw))|.
  \end{align*}
  Then, using Theorem~\ref{principal} we have that
  \begin{align*}
    \P(\Delta_n(uw) >\delta)\;\leq\; \sum_{a\in A}\,&\bigl[\,
    \P\bigl( |\hat{q}_n(a|uw)-q(a|uw)|>\frac{\delta}{2}- \epsilon\,(1+\frac{4\beta\beta^*}{\alpha})\bigr)\\
    & + \P\bigl( |\hat{q}_n(a|\suf(uw))-q(a|\suf(uw))|>  \frac{\delta}{2}- \epsilon\,(1+\frac{4\beta\beta^*}{\alpha})\bigr)\bigr].
  \end{align*}
  Now, for 
  \[
  n\; > \; \frac{2(|A|+1)}{(\frac\delta2-\epsilon\,(1+\frac{4\beta\beta^*}{\alpha}) )q_d}+d
  \]
   we can bound above the right hand side of the expression above using Lemma~\ref{estim1} by 
  \[
 2\, |A|\,(|A|+1) \,\ee \exp \bigl[- (n\!-\!d)\;\frac{[\frac{\delta}{2}- \epsilon\,(1+\frac{4\beta\beta^*}{\alpha})]^2 q_d^2}{32e(1+\beta)|A|^2(d+1)}\bigl].
   \]
\end{proof}

\begin{lemma}\label{lemUn}
 For any $d$ satisfying (\ref{d}), for any $\delta < D_d - 2\epsilon(1+\frac{4\beta\beta^*}{\alpha})$,
 for any 
 \[
n\; > \;  \frac{4(|A|+1)}{(D_d - 2 \epsilon\,(1+\frac{4\beta\beta^*}{\alpha}) -\delta )q_d}+d
\]
and for any $w\in\hat{\tau}_n^{\delta,d}$ with $\ell(w)<K$ we have that
  \[
  \P(\bigcap_{uw\in\tau|_d} \{\Delta_n(uw) \leq \delta\})\,\leq 
2\,(|A|+1)\, \ee \exp \bigl[- (n\!-\!d)\;\frac{[D_d-2(1+\frac{4\beta\beta^*}{\alpha})\epsilon-\delta]^2 q_d^2}{256e(1+\beta)|A|^2(d+1)}\bigl]. 
    \]
\end{lemma}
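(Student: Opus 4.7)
The plan is to reduce the intersection to the event of a single well-chosen $u^* w\in\tau|_d$, and then to analyse $\Delta_n(u^* w)$ by combining the gap produced by $D_d$, the transfer from $p$ to $q$ given by Theorem~\ref{principal}, and the empirical fluctuation bound from Lemma~\ref{estim1}. First, I would invoke condition~(\ref{d}) (together with $\ell(w)<K$) to produce $u^*$ such that $u^* w\in\tau$ with $\ell(u^* w)\le d$; then $u^* w$ is among the sequences being intersected, and hence
\[
\P\Bigl(\bigcap_{uw\in\tau|_d}\{\Delta_n(uw)\le\delta\}\Bigr)\;\le\;\P\bigl(\Delta_n(u^* w)\le\delta\bigr).
\]

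Because $u^* w\in\tau$ has length at most $d$, the definition of $D_d$ produces a letter $a^*\in A$ with $|p(a^*|u^* w)-p(a^*|\suf(u^* w))|\ge D_d$. Two applications of Theorem~\ref{principal}, one at $u^* w$ and one at $\suf(u^* w)$, push this gap through to
\[
\bigl|q(a^*|u^* w)-q(a^*|\suf(u^* w))\bigr|\;\ge\;D_d-2\epsilon\bigl(1+\frac{4\beta\beta^*}{\alpha}\bigr).
\]
On the event $\{\Delta_n(u^* w)\le\delta\}$ the empirical gap at $a^*$ is at most $\delta$, so by the triangle inequality at least one of $|\hat q_n(a^*|u^* w)-q(a^*|u^* w)|$ and $|\hat q_n(a^*|\suf(u^* w))-q(a^*|\suf(u^* w))|$ must exceed $t:=\tfrac12[D_d-2\epsilon(1+\tfrac{4\beta\beta^*}{\alpha})-\delta]$. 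A union bound reduces the problem to two instances of Lemma~\ref{estim1} with threshold $t$.

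The lower bound on $n$ in the hypothesis is calibrated precisely so that $\frac{|A|+1}{(n-\ell(u^* w)+1)q(u^* w)}\le t/2$ (using $\ell(u^* w)\le d$ and $q(u^* w)\ge q_d$); this makes the bracket in Lemma~\ref{estim1} at least $t/2$, and squaring it loses a factor of $4$. Combined with $q(u^* w)\ge q_d$ and $\ell(u^* w)+1\le d+1$ feeding through the denominator, the factor $16 e(1+\beta)|A|^2(\ell(u^* w)+1)$ of Lemma~\ref{estim1} becomes the $256 e(1+\beta)|A|^2(d+1)$ of the statement, and summing the two Lemma~\ref{estim1} bounds turns the prefactor $(|A|+1)\ee$ into $2(|A|+1)\ee$. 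The main obstacle is really the very first step: checking that condition~(\ref{d}) together with $\ell(w)<K$ indeed supplies an extension $u^* w\in\tau$ with $\ell(u^* w)\le d$, since once this $u^* w$ is available, the remainder is just a triangle inequality and careful bookkeeping of the constants in Lemma~\ref{estim1}.
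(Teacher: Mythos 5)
Your proposal follows essentially the same route as the paper's proof: reduce the intersection to a single $\bar{u}w\in\tau\cap\tau|_d$ supplied by condition~(\ref{d}), use the five-term triangle inequality (the $D_d$ gap, two applications of Theorem~\ref{principal}, two empirical errors) to force one empirical error above $t=\tfrac12[D_d-2\epsilon(1+\tfrac{4\beta\beta^*}{\alpha})-\delta]$, and finish with a union bound and Lemma~\ref{estim1}, with exactly the calibration of $n$ and the constants ($t^2/4$ giving $256$, the factor $2(|A|+1)\ee$) that the paper uses. The step you flag as the "main obstacle" -- extracting $u^*w\in\tau$ with $\ell(u^*w)\le d$ from (\ref{d}) -- is treated no more carefully in the paper, which simply asserts the existence of such a $\bar{u}w$.
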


\begin{proof}
As $d$ satisfies (\ref{d}) we have that there exists a $\bar{uw}\in\tau|_d$ such that
  $\bar{uw}\in\tau$. Then 
  \begin{equation*}
   \P(\bigcap_{uw\in\tau|_d} \{\Delta_n(uw) \leq \delta\})\,\leq\,  \P(\Delta_n(\bar{uw}) \leq \delta).
  \end{equation*}
Observe that for  any $a\in A$,
  \begin{align*}
    |\hat{q}_n(a|\suf(\bar{uw}))-\hat{q}_n(a|\bar{uw})|\,\geq\,\,& |p(a|\suf(\bar{uw}))-p(a|\bar{uw})| -
    |\hat{q}_n(a|\suf(\bar{uw}))-q(a|\suf(\bar{uw}))| - \\ 
    & |\hat{q}_n(a|\bar{uw})-q(a|\bar{uw})| - |q(a|\suf(\bar{uw}))-p(a|\suf(\bar{uw}))| -\\
    &|q(a|\bar{uw})-p(a|\bar{uw})|.
  \end{align*}
   Hence, we have that for any $a\in A$ 
  \[
  \Delta_n(\bar{uw}) \, \geq\, D_d - 2\epsilon(1+\frac{4\beta\beta^*}{\alpha}) - |\hat{q}_n(a|\suf(\bar{uw}))-q(a|\suf(\bar{uw}))| - |\hat{q}_n(a|\bar{uw})-q(a|\bar{uw})|\/.
  \]
  Therefore,
  \begin{align*}
    \P(\Delta_n(\bar{uw}) \leq \delta)\;\leq\; &\P\bigl(\,\bigcap_{a\in A}
    \{\,|\hat{q}_n(a|\suf(\bar{uw}))-q(a|\suf(\bar{uw}))|\geq\frac{D_d-2\epsilon(1+\frac{4\beta\beta^*}{\alpha})-\delta}2\,\}\,\bigl) \\
    &  +\; \P\bigl(\,\bigcap_{a\in A}
    \{\,|\hat{q}_n(a|\bar{uw})-q(a|\bar{uw})|\geq\frac{D_d-2\epsilon(1+\frac{4\beta\beta^*}{\alpha})-\delta}2\,\}\,\bigl)\/.
  \end{align*}
  As $\delta < D_d-2\epsilon(1+\frac{4\beta\beta^*}{\alpha})$ and 
  \[
  n\; > \; \frac{4(|A|+1)}{(D_d - 2 \epsilon\,(1+\frac{4\beta\beta^*}{\alpha}) -\delta )q_d}+d
  \]
  we can use Lemma~\ref{estim1} to bound above the right hand side of the 
  inequality above by
  \[
  2\,(|A|+1)\, \ee \exp \bigl[- (n\!-\!d)\;\frac{[D_d-2(1+\frac{4\beta\beta^*}{\alpha})\epsilon-\delta]^2 q_d^2}{256e(1+\beta)|A|^2(d+1)}\bigl]. 
\]
This concludes the proof of Lemma~\ref{lemUn}
\end{proof}

Now we proceed with the proof of our main result.

\begin{proof}[Proof of Theorem~\ref{expobounded}]

Define
\[
O_{n,\delta}^{K,d}=\bigcup_{\substack{w \in \tau\\[.1cm]\ell(w)< K}}\bigcup_{uw\in\hat\tau_n^{\delta,d}} \{ \Delta_n(uw) >\delta \}
\/,
\]
and
\[
U_{n,\delta}^{K,d}=\bigcup_{\substack{w \in \hat\tau_n^{\delta,d}\\[.1cm]\ell(w) < K}}\bigcap_{uw\in\tau|_d} \{\Delta_n(uw) \leq \delta\}.
\]
Then, if  $d < n$ we have that
\[
\{\hat{\tau}_n^{\delta,d}|_K \neq \tau|_K\} = O_{n,\delta}^{K,d}\cup U_{n,\delta}^{K,d}.
\]
Using the definition of $O_{n,\delta}^{K,d}$ and $U_{n,\delta}^{K,d}$ we have that 
\[
\P(\hat{\tau}_n^{\delta,d}|_K \neq \tau|_K)\;\leq\; \sum_{\substack{w \in
    \tau\\ \ell(w)<K}}\sum_{uw\in\hat{\tau}_n^{\delta,d}} \P(  \Delta_n(uw) > \delta) + \sum_{\substack{w \in
    \hat\tau_n^{\delta,d}\\\ell(w)<K}}\P ( \bigcap_{uw\in\tau|_d}  \Delta_n(uw) \leq \delta ) .
\]
Applying Lemma~\ref{lemOn} and Lemma~\ref{lemUn} we obtain, for 
\[
n \;>\; \frac{4(|A|+1)}{[\min(\delta, D_d-\delta)-2\epsilon(1+\frac{4\beta\beta^*}{\alpha})]q_d
 } + d,
\]
 the inequality 
\begin{align*}
\P(\hat{\tau}_n^{\delta,d}|_K &\neq \tau|_K)\,\leq \, 
 4\,\ee\,(|A|+1)\,|A|^{d+1} \exp \bigl[- (n-d)\;\frac{[ \min(\delta, D_d-\delta)-2\epsilon(1+\frac{4\beta\beta^*}{\alpha})]^2 q_d^2}{256e(1+\beta)|A|^2(d+1)}\bigl]. 
\end{align*}
We conclude the proof of Theorem~\ref{expobounded}. 
\end{proof}

\begin{proof}[Proof of Corollary~\ref{main_cor}]
It follows from Theorem~\ref{expobounded}, using 
the first Borel-Cantelli Lemma and
the fact that the bounds for the error estimation of the context tree are summable in $n$ for a fixed $d$ satisfying (\ref{d}). 
\end{proof}

\bibliography{./referencias}  
\bibliographystyle{dcu}

\end{document}